\newtheorem{assumption}{Assumption}[section]
\newcommand{\circled}[2][]{\tikz[baseline=(char.base)]
    {\node[shape = circle, draw, inner sep = 1pt]
    (char) {\phantom{\ifblank{#1}{#2}{#1}}};%
    \node at (char.center) {\makebox[0pt][c]{#2}};}}
\crefname{hypothesis}{Hypothesis}{Hypotheses}
\title{A Stochastic variance reduced primal dual fixed point method for linearly constrained separable  optimization\thanks{Submitted to the editors DATE.}}
\author{Ya-Nan Zhu and Xiaoqun Zhang}
\begin{document}
\nolinenumbers
\maketitle

\begin{abstract}
In this paper we combine the stochastic variance reduced gradient (SVRG) method \cite{SVRG} with the primal dual fixed point method (PDFP) proposed in \cite{PDFP} to solve a sum of two  convex functions and one of which is linearly composite. This type of problems are typically arisen in sparse signal and image reconstruction. The proposed SVRG-PDFP can be seen as a generalization of Prox-SVRG \cite{ProxSVRG} originally designed  for the minimization of a sum of two convex functions. Based on some standard assumptions, we propose two variants, one is for strongly convex objective function and the other is  for general convex cases. Convergence analysis shows  that the convergence rate of SVRG-PDFP is $\mathcal{O}(\frac{1}{k})$ (here $k$ is the iteration number) for general convex objective function and linear for strongly convex case. Numerical examples on machine learning and CT image reconstruction are provided to show the effectiveness of the algorithms. 
\end{abstract}

\begin{keywords} 
Stochastic Variance Reduced Gradient, Primal Dual Fixed Point Method. 
\end{keywords}

\section{Introduction.}
In  machine learning and imaging sciences, we often consider the following type of optimization problems: 
\begin{equation}\label{pb1}
\underset{x \in \mathbb{R}^d}{\min~} \frac{1}{n} \sum_{i = 1}^{n}f_i(x)+ (g \circ B) (x),   
\end{equation}
where $f_i:\mathbb{R}^d \to \mathbb{R} \cup \{ \infty\}$ is convex lower semi-continuous (l.s.c.) function with  Lipschitz continuous gradient, the function $g: \mathbb{R}^r \to \mathbb{R} \cup \{ \infty\}$ is also convex l.s.c. but may not be differentiable and $B : \mathbb{R}^{d} \rightarrow \mathbb{R}^{r}$ is a linear transform. 

In machine learning, the formulation \eqref{pb1} is known as regularized empirical minimization \cite{ERM} when $f(x)$ is some loss function defined on the data  and $g\circ B(x)$ is a regularizer. Generally the linear transform $B$ is set as identity in  many regularized empirical minimization problems. For example, the well-known Lasso problem takes the form: 
$f_i(x) = \frac{1}{2}(a_i^Tx - b_i)^2, i = 1, 2, \cdots,n$ here $b_i$ is the label of the  sample $a_i$, $x$ is the weight to be found and $g(\cdot)=\lambda \lVert \cdot \rVert_1$ for $\lambda > 0$. In binary classification task, $f_i(x)$ is replaced by logistic loss $f_i(x)=\log(1 + \exp(-b_ia_i^Tx))$ where $b_i \in \{-1,1\}$. To further improve the generalization ability of learning models, non-identity linear operator $B$ for the regularization has been considered in the literature.  For example, if we consider  $B = [G;I]$ where $G$ is determined by sparse inverse covariance selection \cite{Banerjee2008model}, then the problem is known as {graph-guided fussed Lasso} \cite{Kim}. Because of the large size of $G$ and $n$, it is necessary to design an algorithm with relatively simple iteration rule to solve this type of optimization problems. 

In  imaging science, this formulation is typically considered for solving an ill-posed inverse problem. For example,  tomographic problems consist of estimating a two or three dimensional function from a set of line integrals. Typically, a regularized  reconstruction model can be formulated as 
\begin{equation}\label{TVPb1}
\underset{x \in \mathbb{R}^d}{\min}~ \lVert \mathcal{A}x - f\rVert_2^2 + g(\nabla x)  
\end{equation}
where $x$ is the image to be reconstructed, $\mathcal{A}$  is Radon transform, $f$ is the measured projection vector and $\nabla(\cdot)$ is a discrete gradient operator. For $g$, if we choose $g = \lVert \cdot \rVert_1$, then \textbf{(\ref{TVPb1})} becomes the Total Variation-$L_2$ model (TV-$L_2$). In practice there are thousands or even millions of projections  and the operator $\nabla(\cdot)$ is necessary to ensure the quality of the reconstructed image. Suppose the number of projections is $n$, denote $\mathcal{A}_i$ as the $i$-th projection operator and $f_i$ is the $i$-th component of $f$, then the problem \textbf{(\ref{TVPb1})} can be reformulated as 
\begin{equation}\label{TVPb2}
\underset{x \in \mathbb{R}^d}{\min}~ \frac{1}{n}\sum_{i = 1}^{n}(\mathcal{A}_ix - f_i)^2 + \frac{1}{n}g(\nabla x)  
\end{equation}
which is in the form of \textbf{(\ref{pb1})}. 
 
 In this paper, we aim to consider a stochastic algorithm to solve the problems (\ref{pb1}) when the data size becomes large. For problems with a simple regularizer (i.e. $B = I$), one of the most popular deterministic  method is the class of  proximal gradient decent (PGD) (also known as Proximal Forward-backward  splitting method)  \cite{PFBS,FISTA,FastPPT,Nesterov,GFB} and there stochastic versions \cite{FOBOS,converPGD}. Denote $f(x)=\frac{1}{n} \sum_{i = 1}^{n}f_i(x)$, specifically in stochastic gradient method (SGD), one uses  a small portion of data to compute a noisy gradient, i.e. the stochastic gradient $\nabla \hat{f}(x)$ is computed as: 
\begin{equation}\label{SG}
\nabla \hat{f}(x) = \frac{1}{b}\sum_{i \in I_k} \nabla f_i(x), \qquad k = 1,\cdots,\frac{n}{b}.     
\end{equation} 
where $I_1,I_2,\cdots,I_{\frac{n}{b}}$ denote a disjoint partition of the index set $\{1,2,\cdots,n\}$ and the number of element in each $I_i,i = 1,\cdots,\frac{n}{b}$ is $b$ (which is known as batch size). Generally $I_k$ in \textbf{(\ref{SG})} is chosen randomly at each iteration. The idea of Prox-SG  method combines the stochastic gradient step (\ref{SG}) and a proximal iteration of $g$, which will reduce the computation cost from $\mathcal{O}(n)$ to $\mathcal{O}(b)$ at each iteration and generally the batch size $b \ll n$. Owing to the variance caused by random sampling, Prox-SG uses a diminishing step size rule which leads to a sub-linear convergence rate. In order to accelerate the convergence, the variance reduction technique was firstly considered for $g(\cdot) = 0$ \cite{SVRG,SAG}. For example, Le Roux et al. \cite{SAG} proposed  stochastic averaged gradient (SAG)  and Johnson and Zhang \cite{SVRG} developed another algorithm called stochastic variance reduced gradient (SVRG). Combining with PGD, Xiao and Zhang \cite{ProxSVRG} proposed the Prox-SVRG for solving the problems (\ref{pb1}) with $B=I$. In contrast to one-level stochastic gradient (\ref{SG}),  the idea of Prox-SVRG proceeds in two stages. First, the full gradient of the past estimate of $\tilde{x}$ is computed as $z = \frac{1}{n}\sum_{i = 1}^{n}\nabla f_i(\tilde{x})$. Then  an approximate gradient is computed by
\begin{equation}
 \nabla \hat{f}(x_k) = \frac{1}{b}\sum_{i \in I_k}(\nabla f_i(x_k) - \nabla f_i(\tilde{x})) + z,      
\end{equation}  
where $\tilde{x}$ is outer iterate and $x_k$ is inner iterate. Based on this modification, Prox-SVRG allows to use a constant step size and can achieve linear convergence for strongly convex objective function.


When the linear transform $B \not = I$, PGD type methods need to solve $\mathrm{Prox}_{g \circ B}(\cdot)$ which is not easy for many problems. In the deterministic setting, many algorithms such as split Bregman \cite{DR1,DR2} (or alternating direction of multipliers method (ADMM) \cite{ADMM1,ADMM2}), primal dual hybrid gradient (PDHG) \cite{PDHG}, fixed point method based on proximity operator ($\mathrm{FP^2O}$) \cite{FP2O}, primal dual fixed point method (PDFP) \cite{PDFP} are proposed and largely applied in imaging and data sciences. The ADMM-type method can be interpreted as a primal dual method solving the reformulation of problem \textbf{(\ref{pb1})} as follows: 
\begin{equation}
\begin{aligned}
&\min ~ f(x) + g(y) \\
& \quad s.t.\quad  Bx = y.
\end{aligned} 		
\end{equation} 	
and the method is proceeded by alternating updating the augmented Lagrangian: 
\begin{equation}
L(x,y,\lambda) = f(x) + g(y) + \langle \lambda,Bx - y \rangle + \frac{\rho}{2}\lVert Bx - y \rVert_2^2.	
\end{equation}
Different from ADMM-type methods, both PDFP and PDHG  solve the min-max reformulation of the problem \textbf{(\ref{pb1})}:
\begin{equation}
\underset{x \in \mathbb{R}^d,v \in V}{ \min\max~}f(x) + \langle Bx,v \rangle - g^*(v),
\end{equation}
where $g^*(\cdot)$ is the conjugate function of $g(\cdot)$ (see definition \textbf{\ref{defcon}}) and $V$ is the domain of $g^*(\cdot)$. The methods PDFP and ADMM are not the same in general. It was shown in \cite{PDFP} that the main advantage of PDFP over ADMM is to avoid subproblem solving and a simpler rule of parameter choosing.  

In the literature, there are many stochastic variants of ADMM, mainly for solving machine learning problems. For example, Stochastic ADMM (STOC-ADMM) \cite{STOCADMM}, Regularized Dual Averaging ADMM (RDA-ADMM) \cite{RDAOPGADMM}, Online Proximal Gradient ADMM (OPG-ADMM) \cite{RDAOPGADMM}, Stochastic Averaged Gradient ADMM (SA-ADMM) \cite{SAGADMM}, Scalable ADMM (SCAS-ADMM) \cite{SCASADMM}, Stochastic Dual Coordinate Ascent ADMM (SDCA-ADMM) \cite{SDCAADMM} and Stochastic Variance Reduced ADMM (SVRG-ADMM) \cite{SVRGADMM} etc. For PDHG, Stochastic Primal Dual Hybrid Gradient (SPDHG) was also proposed in \cite{SPDHG} for image reconstruction problems where in each iteration a subset of dual variable is randomly updated. 

 Recently, we proposed a stochastic PDFP (SPDFP) algorithm in \cite{ZZ20} for solving composite  problems (\ref{pb1}). Based on a strong convexity and some standard  assumptions on the gradient of $f(x)$, we established the convergence rate of SPDFP as $O(1/k^\alpha)$ with stepsize $\gamma_k=1/k^\alpha$,  where $k$ is the iteration number.  In this paper, we aim to improve the convergence order of the stochastic PDFP algorithm by considering a variance reduced  PDFP algorithm. The idea of the proposed algorithm  SVRG-PDFP apply SVRG to the gradient of $f(x)$.  Theoretically it can be shown that  the proposed algorithm  can achieve linear convergence rate for strongly convex case and $\mathcal{O}(1/k)$ for a general convex case.  Moreover it can be shown that for the special  case $B = I$, SVRG-PDFP reduces to  Prox-SVRG, thus SVRG-PDFP can be seen as a natural generalization of Prox-SVRG.  Finally, a byproduct of the convergence analysis is that when we use a full batch size $b=n$, the algorithm reduces to a determintic PDFP for generally convex function,  and we obtain   $o(1/k)$ convergence rate for  PDFP, that was not studied in the original work \cite{PDFP}. Finally, the numerical results are performed on graphic Lasso problem and 2D/3D CT reconstruction. The performance of SVRG-PDFP is illustrated with a detail comparison to PDFP, SPDFP and some variants of ADMM. In addition, the numerical results show that for largely scale image reconstruction problem stochastic algorithms are more beneficial in the case of limited  GPU computation resource.

The organization of the paper is as following. In the next section, SVRG-PDFP for strongly convex (Algorithm 1) and general convex case (Algorithm 2) will be present respectively. Then the convergence results of the algorithms will be provided with the details present in  Appendix. Finally, numerical experiments on graphic Lasso and CT image recontruction are  present with detailed  comparison to the other algorithms.

\section{Algorithm.}
In this section, we introduce the SVRG-PDFP for strongly convex and general convex cases respectively.  First we give the definition of $\mathrm{Prox}$ operator.
\begin{definition}\label{proxf}
The operator $\mathrm{Prox}_{g}(\cdot):\mathbb{R}^r \rightarrow \mathbb{R}^r $ is defined by
\begin{equation}
\begin{aligned}
\mathrm{Prox}_{g}(y): ~
& y \rightarrow \underset{x \in \mathbb{R}^r}{\arg\min}\big\{ f(x) + \frac{1}{2} \lVert x - y \rVert_2^2 \big\}.
\end{aligned}  
\end{equation}
\end{definition}

\begin{definition}\label{defcon}
{
The conjugate function of $g(\cdot)$ at $v$ is defined by
\begin{equation}
 g^*(v) = \underset{y \in dom(g)}{\sup} v^Ty - g(y)  
\end{equation}
where $v ~ \in ~ dom(g^*(\cdot)) = \{ v | g^*(v) < \infty \} = V^*$.
}
\end{definition}

Recall the primal dual fixed point method
\hspace*{\fill} \\
\begin{center}
\fbox{
\shortstack[l]{
    \textbf{Algorithm: Primal dual fixed point method} \\
    \rule{300pt}{2pt}\\
    Step 1: set $x_1 \in \mathbb{R}^d,v_1 \in \mathbb{R}^{m}$ and choose proper $\gamma>0,\lambda > 0$,\\
    Step 2: for $k = 1,2,\cdots$ \\
    \qquad\qquad $x_{k + \frac{1}{2}} = x_k - \gamma\nabla f(x_k)$ \\
    \qquad\qquad$v_{k + 1} = \big(I - \mathrm{Prox}_{\frac{\gamma}{\lambda}g}\big)\big(  Bx_{k + \frac{1}{2}} + (I - \lambda B B^T)v_k \big)$ \\
    \qquad\qquad $x_{k + 1} = x_{k + \frac{1}{2}} - \lambda B^T v_{k+1}$\\
    until the stop criterion is satisfied.
}
}
\end{center}
\hspace*{\fill} \\
PDFP can be reformulated as 
\begin{equation}\label{eq14}
\left\{
\begin{aligned}
y_{k + 1} & = x_k - \gamma \nabla f(x_k) - \gamma B^T v_k \\
v_{k + 1} & = \mathrm{Prox}_{\frac{\lambda}{\gamma}g^*}(\frac{\lambda}{\gamma}By_{k + 1} + v_k) \\
x_{k + 1} & = x_k - \gamma\nabla f(x_k) - \gamma B^Tv_{k + 1}.
\end{aligned}
\right.     
\end{equation}
One may refer to \cite{PDFP, ZZ20} for more details.  By combining the idea of SVRG and the reformulation of PDFP in \textbf{(\ref{eq14})}, we propose the following two algorithms for strongly convex and generally convex cases respectively. 

\begin{center}
\fbox{
\shortstack[l]{
    \textbf{Algorithm $1$: SVRG-PDFP for strongly convex problems} \\
	\rule{350pt}{2pt}\\
    \textbf{Input}: Choose proper $\gamma > 0,\lambda > 0, m > 0$, input $\tilde{x}_0 \in \mathbb{R}^d,\tilde{v}_0 \in \mathbb{R}^{r}$, batch size $b$. \\
    \textbf{for} $s = 0,1,2,\cdots$ do \\
    \qquad\quad $\tilde{x} = \tilde{x}_{s}$ \\
    \qquad\quad $x_0 = \tilde{x}_{s}, v_0 = \tilde{v}_{s}$ \\
    \qquad\quad $\tilde{z} = \nabla f(\tilde{x}) = \frac{1}{n}\sum_{i = 1}^n \nabla f_i(\tilde{x})$ \\
    \qquad\quad \textbf{for} $k = 0,1,2,\cdots, m -1$ do \\
    \qquad\quad ~~~~ Randomly choose $I_k \in \{I_1,I_2\cdots I_{\frac{n}{b}} \}$. \\
    \qquad\quad ~~~~ $\nabla \hat{f}(x_k) = \frac{1}{b} \sum_{i_k \in I_k} (\nabla f_{i_k}(x_k) - \nabla f_{i_k}(\tilde{x})) + \tilde{z}$ \\
    \qquad\quad ~~~~  $y_{k + 1} = x_k - \gamma \nabla \hat{f}(x_k) - \gamma B^Tv_k$ \\
    \qquad\quad ~~~~ $v_{k + 1} = \mathrm{Prox}_{\frac{\lambda}{\gamma}g^*}\big( \frac{\lambda}{\gamma} By_{k + 1} + v_k \big)$ \\
    \qquad\quad ~~~~  $x_{k + 1} = x_k - \gamma \nabla \hat{f}(x_k) - \gamma B^Tv_{k + 1}$\\
    \qquad\quad \textbf{end for} \\
    \qquad\quad $\tilde{x}_{s + 1} = \frac{1}{m}\sum_{i = 1}^{m} x_i$, $\tilde{v}_{s + 1} = \frac{1}{m}\sum_{i = 1}^{m} v_i$ \\
    \textbf{end for} \\
    \textbf{Output:} $\tilde{x}_s$.
}
}
\end{center}
\hspace*{\fill} \\
\begin{center}
\fbox{
\shortstack[l]{
	\textbf{Algorithm $2$: SVRG-PDFP for general convex problems} \\
    \rule{350pt}{2pt}\\
    \textbf{Input}: Choose proper $\gamma > 0,\lambda > 0, m > 0$, input $\tilde{x}_0 \in \mathbb{R}^d,\tilde{v}_0 \in \mathbb{R}^{r}$, batch size $b$. \\
    \textbf{for} $s = 0,1,2,\cdots,T - 1$ do \\
    \qquad\quad $\tilde{x} = \tilde{x}_{s}$ \\
    \qquad\quad $x_0 = \hat{x}_{s}, v_0 = \hat{v}_{s}$ \\
    \qquad\quad $\tilde{z} = \nabla f(\tilde{x}) = \frac{1}{n}\sum_{i = 1}^n \nabla f_i(\tilde{x})$ \\
    \qquad\quad \textbf{for} $k = 0,1,2,\cdots, m -1$ do \\
    \qquad\quad ~~~~ Randomly choose $I_k \in \{I_1,I_2\cdots I_{\frac{n}{b}} \}$. \\
    \qquad\quad ~~~~ $\nabla \hat{f}(x_k) = \frac{1}{b} \sum_{i_k \in I_k} (\nabla f_{i_t}(x_k) - \nabla f_{i_t}(\tilde{x})) + \tilde{z}$ \\
    \qquad\quad ~~~~  $y_{k + 1} = x_k - \gamma \nabla \hat{f}(x_k) - \gamma B^Tv_k$ \\
    \qquad\quad ~~~~ $v_{k + 1} = \mathrm{Prox}_{\frac{\lambda}{\gamma}g^*}\big( \frac{\lambda}{\gamma} By_{k + 1} + v_k \big)$ \\
    \qquad\quad ~~~~  $x_{k + 1} = x_k - \gamma \nabla \hat{f}(x_k) - \gamma B^Tv_{k + 1}$\\
    \qquad\quad \textbf{end for} \\
    \qquad\quad $\tilde{x}_{s + 1}= \frac{1}{m}\sum_{i = 1}^{m} x_i$, $\tilde{v}_{s + 1} = \frac{1}{m}\sum_{i = 1}^{m} v_i$, $\hat{x}_{s + 1} = x_m,\hat{v}_{s + 1} = v_m$ \\
    \textbf{end for} \\
    \textbf{Output:} $\overline{x}_T = \frac{1}{T}\sum_{i = 1}^T\tilde{x}_i$.
}
}
\end{center}

\section{Convergence Analysis.}
In this section, we present the convergence results of SVRG-PDFP. The proof can be found in Appendix.  First we present some useful definitions and assumptions.
\begin{definition}\label{def3}
The Bregman distance of a convex function $f$ is defined by
\begin{equation}
D_f(x,y) = f(x) - f(y) - \nabla f(y)^T(x - y).	
\end{equation}
\end{definition}

\begin{assumption}\label{assmp1}
The function $f_i(x),i = 1,2,\cdots,n$ is proper convex l.s.c. with $L_i$-Lipschitz continuous gradient i.e.
\begin{equation}
f_i(y) \leq f_i(x) + \nabla f_i(x)^T(y - x) + \frac{L_i}{2} \lVert y - x \rVert_2^2 \qquad \forall~ x,y \in dom(f_i).
\end{equation}
then it can be seen that the function $f(x)=\frac{1}{n}\sum_{i=1}^n f_i(x) $ also has Lipschitz continuous gradient and we denote its Lipschitz parameter as $\frac{1}{\beta}$.
\end{assumption}

\begin{assumption}\label{assmp2}
The function $f(x)$ is $\mu_f$-strongly convex i.e.
\begin{equation}
f(y) \geq f(x) + \nabla f(x)^T(y - x) + \frac{\mu_f}{2} \lVert y - x \rVert_2^2 \qquad \forall~ x,y \in dom(f).  
\end{equation}
\end{assumption}
\noindent Recall the problem \textbf{(\ref{pb1})}
\begin{equation}\label{eq20}
 \underset{x \in \mathbb{R}^d}{\min~} f(x) + (g \circ B) (x),  
\end{equation}
and denote $g^*(x)$ as the conjugate of $g(x)$. Define  $L(x,v) = f(x) + \langle Bx,v \rangle - g^*(v)$, then the min-max reformulation of \textbf{(\ref{eq20})} is given as follows
\begin{equation}\label{eq21}
\begin{aligned}
(x^*, v^*) & =  \underset{\quad ~x \in \mathbb{R}^d,v \in V}{\arg \min\max~} L(x,v) 
 = \underset{\quad ~ x \in \mathbb{R}^d,v \in V}{\arg \min\max~}f(x) + \langle Bx,v \rangle - g^*(v),
\end{aligned}
\end{equation}
where $V$ denotes the domain of $g^*(\cdot)$. The optimality condition of \textbf{(\ref{eq21})} is
\begin{equation}\label{22}
\left\{
\begin{aligned}
& f(x) - f(x^*) + (B^Tv^*)^T(x - x^*) \geq 0, \qquad x \in \mathbb{R}^d, \\
& g^*(v) - g^*(v^*) - (Bx^*)^T(v - v^*) \geq 0, \qquad v \in V.
\end{aligned}   
\right.
\end{equation}
where $(x^*, v^*)$ is an optimal primal dual solution pair.  The convergence is established w.r.t. $R(x,v)$:
\begin{equation}
\begin{aligned}
R(x,v) & = f(x) - f(x^*) - \nabla f(x^*)^T(x - x^*) + g^*(v) - g^*(v^*) - (Bx^*)^T(v - v^*)\\
& = D_f(x,x^*) + D_{g^*}(v,v^*)	
\end{aligned}
\end{equation}
where  $D_f(\cdot,\cdot),D_{g^*}(\cdot,\cdot)$ denote the Bregman distance of $f$ and $g^*$ respectively.  
\begin{proposition}
$R(x,v) \geq 0, \forall x \in dom(f),v \in V$.
\end{proposition}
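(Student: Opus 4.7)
The plan is to split $R(x,v)$ into its two Bregman-distance pieces and show each is nonnegative, essentially by convexity applied at the saddle point $(x^*,v^*)$.

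First I would handle the $D_f$ term. Since $f$ is convex and differentiable (Assumption \ref{assmp1}), the first-order characterization of convexity gives
\begin{equation*}
f(x) \geq f(x^*) + \nabla f(x^*)^T(x - x^*) \qquad \forall\, x \in \mathrm{dom}(f),
\end{equation*}
which is exactly $D_f(x,x^*) \geq 0$. This is the standard nonnegativity of the Bregman distance of a convex differentiable function and needs no extra hypothesis.

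Next I would handle the $D_{g^*}$ term. Here the subtlety is that $g^*$ is generally nondifferentiable, so the naive appeal to convexity does not immediately apply: the linear term $(Bx^*)^T(v-v^*)$ in $R(x,v)$ must be interpreted as being evaluated against a subgradient of $g^*$ at $v^*$. The key observation is that this is already guaranteed by the optimality system (\ref{22}): the second line of (\ref{22}) reads
\begin{equation*}
g^*(v) - g^*(v^*) - (Bx^*)^T(v - v^*) \geq 0 \qquad \forall\, v \in V,
\end{equation*}
which is precisely $D_{g^*}(v,v^*) \geq 0$. Equivalently, the KKT condition says $Bx^* \in \partial g^*(v^*)$, and then convexity of $g^*$ (as the conjugate of a convex l.s.c.\ function) yields the same inequality.

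Adding the two nonnegative quantities gives $R(x,v) = D_f(x,x^*) + D_{g^*}(v,v^*) \geq 0$ for every $x \in \mathrm{dom}(f)$ and every $v \in V$. The only genuine point requiring care is the interpretation of the Bregman distance of the nonsmooth function $g^*$, which is resolved by noting that $Bx^*$ plays the role of a subgradient at $v^*$ by virtue of the saddle-point optimality condition (\ref{22}); beyond that the argument is a direct application of convexity and should fit in a couple of lines.
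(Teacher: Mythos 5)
Your proof is correct and is exactly the argument the paper implicitly relies on (the paper states the proposition without proof): $D_f(x,x^*)\geq 0$ by convexity of $f$, and $D_{g^*}(v,v^*)\geq 0$ is literally the second line of the optimality system \textbf{(\ref{22})}, i.e.\ $Bx^*\in\partial g^*(v^*)$. Your remark that the linear term must be read as a subgradient evaluation for the nonsmooth $g^*$ is the right point of care, and nothing further is needed.
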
 
\begin{lemma}\label{lm1}
Suppose Assumption \textbf{\ref{assmp1}} holds, then the variance of $\nabla \hat{f}(x_k)$ is bounded by
\begin{equation}\label{lm1eq}
\begin{aligned}
& \mathbb{E}\big(\lVert \nabla \hat{f}(x_k) - \nabla f(x_k) \rVert^2 \big)
 \leq C(b)\big( D_f(x_k,x^*) + D_f(\tilde{x},x^*)\big),
\end{aligned}   
\end{equation}
where $C(b) = \frac{4(n - b)L_{max}}{b(n - 1)}$ and $L_{max} = \max\{L_1,\cdots,L_n\}$.
\end{lemma}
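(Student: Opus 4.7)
The plan is to mirror the classical Prox-SVRG variance analysis of Xiao and Zhang, with co-coercivity doing the heavy lifting. First I would center the estimator: writing $\xi_i = \nabla f_i(x_k) - \nabla f_i(\tilde x) - (\nabla f(x_k) - \nabla f(\tilde x))$, one checks that $\mathbb{E}_{i}[\xi_i]=0$ when $i$ is drawn uniformly from $\{1,\dots,n\}$, so that $\nabla\hat f(x_k)-\nabla f(x_k) = \frac{1}{b}\sum_{i_k\in I_k}\xi_{i_k}$ is a mean-zero average of $b$ samples drawn without replacement from $\{\xi_1,\dots,\xi_n\}$.

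Next I would apply the standard sampling-without-replacement variance identity, which gives
\begin{equation*}
\mathbb{E}\Bigl\lVert \tfrac{1}{b}\sum_{i_k\in I_k}\xi_{i_k}\Bigr\rVert^2 \;=\; \tfrac{n-b}{b(n-1)}\cdot\tfrac{1}{n}\sum_{i=1}^n \lVert \xi_i\rVert^2,
\end{equation*}
and then drop the centering via $\mathbb{E}\lVert X - \mathbb{E} X\rVert^2 \le \mathbb{E}\lVert X\rVert^2$ to reduce matters to bounding $\frac{1}{n}\sum_i \lVert \nabla f_i(x_k)-\nabla f_i(\tilde x)\rVert^2$.

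The core step is then co-coercivity: since each $f_i$ is convex with $L_i$-Lipschitz gradient, one has the well-known inequality $\lVert \nabla f_i(x)-\nabla f_i(x^*)\rVert^2 \le 2L_i D_{f_i}(x,x^*)$, obtained by minimizing the Lipschitz upper bound on $f_i$ at the point $x - L_i^{-1}(\nabla f_i(x)-\nabla f_i(x^*))$. Pivoting through $x^*$ with the elementary inequality $\lVert a-b\rVert^2\le 2\lVert a\rVert^2+2\lVert b\rVert^2$ yields
\begin{equation*}
\lVert\nabla f_i(x_k)-\nabla f_i(\tilde x)\rVert^2 \le 4L_i\bigl(D_{f_i}(x_k,x^*)+D_{f_i}(\tilde x,x^*)\bigr).
\end{equation*}
Averaging over $i$, bounding $L_i\le L_{\max}$, and observing that the per-sample Bregman distances aggregate to $D_f(\cdot,x^*)=\frac{1}{n}\sum_i D_{f_i}(\cdot,x^*)$ (because $\nabla f(x^*) = \frac{1}{n}\sum_i \nabla f_i(x^*)$, so the linear terms match) produces the claimed constant $C(b)=\frac{4(n-b)L_{\max}}{b(n-1)}$.

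The only subtle point, and likely the main obstacle to present cleanly, is the sampling-without-replacement variance identity together with the justification that the random-batch estimator still enjoys the $\tfrac{n-b}{b(n-1)}$ reduction relative to a single draw; everything else is bookkeeping around co-coercivity. I would either cite this identity from the Prox-SVRG literature or verify it in one line by expanding $\mathbb{E}\lVert\frac{1}{b}\sum \xi_{i_k}\rVert^2$ and using that $\mathrm{Cov}(\xi_{i_k},\xi_{i_j})=-\frac{1}{n-1}\cdot\frac{1}{n}\sum_i\lVert\xi_i\rVert^2$ for $k\ne j$ under uniform sampling without replacement.
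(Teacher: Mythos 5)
Your proposal is correct and follows essentially the same route as the paper's proof: center the estimator, apply the sampling-without-replacement identity with the factor $\tfrac{n-b}{b(n-1)}$ (which the paper derives by exactly the pairwise-covariance expansion you sketch), drop the centering term, pivot through $x^*$, and finish with co-coercivity. The only cosmetic difference is that you derive the aggregated bound $\tfrac{1}{n}\sum_i\lVert\nabla f_i(x)-\nabla f_i(x^*)\rVert^2\le 2L_{\max}D_f(x,x^*)$ from the per-component inequality, whereas the paper cites it directly from Lemma~3.4 of \cite{SVRG}.
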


 Lemma \textbf{\ref{lm1}} gives the estimate of the variance of the stochastic gradient $\nabla \hat{f}(x_k)$. The proof can be found in the paper \cite{SVRGADMM} and we  also present in  Appendix for the  completeness of the paper.

\subsection{Convergence for Algorithm 1} 
\begin{theorem}\label{thm1}
Suppose Assumptions \textbf{\ref{assmp1}} and \textbf{\ref{assmp2}} hold,  and $g^*(v)$ is $\mu_{g^*}$-strongly convex. Let
\begin{equation}\label{thmrate}
\kappa =  \frac{1}{\mu_f\gamma(1 - \gamma M)m} + \frac{(m + 1) \gamma M}{(1 - \gamma M)m} + \frac{\gamma(1 - \rho_{min}(BB^T))}{\lambda\mu_{g^*}(1 - \gamma M)m} , 
\end{equation}
where $M = 4L_{max}C(b)$. Choose ${0 < \gamma \leq \min\{ \beta,\frac{1}{M}\}}$, ${0 <\lambda \leq \frac{1}{\rho_{max}(BB^T)}}$ ($\rho_{max}(\cdot),\rho_{min}(\cdot)$ denotes the maximum and minimum eigenvalue of a given matrix) and $m$ such that $\kappa < 1$, we then have
\begin{equation}
\mathbb{E}(R(\tilde{x}_s,\tilde{v}_s)) \leq \kappa^s R(\tilde{x}_0,\tilde{v}_0).
\end{equation}
Here $\tilde{x}_s,\tilde{v}_s$ denote the outer iterate of Algorithm $1$.
\end{theorem}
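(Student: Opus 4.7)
The plan is to prove a one-epoch contraction $\mathbb{E}[R(\tilde x_{s+1},\tilde v_{s+1})]\leq \kappa\,\mathbb{E}[R(\tilde x_s,\tilde v_s)]$ and iterate it in $s$. The structural fact I would exploit is that $D_f(\cdot,x^*)$ and $D_{g^*}(\cdot,v^*)$ are each convex in their first argument, so Jensen's inequality applied to the arithmetic means defining $\tilde x_{s+1}$ and $\tilde v_{s+1}$ reduces the task to establishing
\[\tfrac{1}{m}\sum_{k=1}^m\mathbb{E}[R(x_k,v_k)]\leq\kappa\,R(\tilde x_s,\tilde v_s).\]
Thus everything is driven by a per-iteration estimate for the inner loop, summed from $k=0$ to $m-1$.

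First I would rewrite the inner PDFP step as a pair of variational inequalities: the proximal update for $v_{k+1}$ gives $\tfrac{\gamma}{\lambda}(v_k-v_{k+1})+By_{k+1}\in\partial g^*(v_{k+1})$, and the primal update gives the identity $\nabla\hat f(x_k)+B^Tv_{k+1}=(x_k-x_{k+1})/\gamma$. Pairing the subgradient inequality for $g^*$ with test vector $v^*$ together with its $\mu_{g^*}$-strong convexity produces a lower bound on $D_{g^*}(v_{k+1},v^*)$, while combining the Lipschitz descent bound for $f$ (whose quadratic remainder is absorbed by requiring $\gamma\leq\beta$), the $\mu_f$-strong convexity of $f$ tested at $x^*$, and the identity above, yields a companion bound on $D_f(x_{k+1},x^*)$ plus a residual noise inner product involving $\nabla\hat f(x_k)-\nabla f(x_k)$. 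Taking conditional expectation given the history up to $x_k$, Cauchy--Schwarz followed by Lemma~\ref{lm1} controls this noise term by $\gamma M(D_f(x_k,x^*)+D_f(\tilde x,x^*))$ with $M=4L_{\max}C(b)$, and the condition $\gamma\leq 1/M$ allows me to absorb the $D_f(x_k,x^*)$ piece into the left-hand side at the cost of the factor $(1-\gamma M)$ that appears in every denominator of $\kappa$.

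Telescoping the resulting inequality from $k=0$ to $m-1$ collapses the quadratic distances $\|x_k-x^*\|^2$ and $\|v_k-v^*\|^2$, leaving only the boundary terms $\|\tilde x_s-x^*\|^2$ and $\|\tilde v_s-v^*\|^2$, which are in turn controlled by $D_f(\tilde x_s,x^*)/\mu_f$ and $D_{g^*}(\tilde v_s,v^*)/\mu_{g^*}$ via strong convexity; after dividing by $\gamma m$ these produce the first and third components of $\kappa$. The accumulated variance bound contributes $m+1$ copies of $D_f(\tilde x,x^*)$, producing the middle term $(m+1)\gamma M/((1-\gamma M)m)$. The main obstacle is matching the numerator $1-\rho_{\min}(BB^T)$ in the third piece of $\kappa$: this factor only emerges after a delicate cancellation between the quadratic $\tfrac{\lambda}{\gamma}\|v_{k+1}-v_k\|^2$ generated by the proximal step and the primal--dual cross-term $\langle B(x_{k+1}-x_k),v_{k+1}-v_k\rangle$, with the step-size condition $\lambda\rho_{\max}(BB^T)\leq 1$ ensuring the residual quadratic form stays nonnegative. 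Reorganizing the summed inequality into a Lyapunov function of the form $a\|x-x^*\|^2+b\|v-v^*\|^2+\text{cross term}$ should make the cancellation transparent. Once the per-epoch contraction is in place, the stated choice of $m$ guarantees $\kappa<1$, and iterating in $s$ yields the claimed geometric rate.
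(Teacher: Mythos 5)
Your overall architecture coincides with the paper's: per-iteration variational inequalities for the primal and dual updates (the paper's Lemmas~\ref{lm2} and~\ref{lm3}), the variance bound of Lemma~\ref{lm1} with $\gamma\le 1/M$ used to absorb the $D_f(x_k,x^*)$ piece at the cost of the factor $(1-\gamma M)$, telescoping over the inner loop, Jensen on the epoch averages, strong convexity to convert boundary terms into $R(\tilde x_s,\tilde v_s)$, and iteration over $s$. Two of your technical pivots, however, are described in ways that would not go through as stated. First, the noise term: if you apply Cauchy--Schwarz (or Young) to $(\nabla\hat f(x_k)-\nabla f(x_k))^T(x^*-x_{k+1})$ directly, the resulting $\tfrac{1}{2\gamma}\lVert x_{k+1}-x^*\rVert_2^2$ enters with exactly the weight that cancels the $-\lVert x_{k+1}-x^*\rVert_2^2$ produced by the primal update, and the telescope collapses. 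The paper instead splits $x^*-x_{k+1}=(x^*-\overline x_k)+\gamma(\nabla\hat f(x_k)-\nabla f(x_k))$ with $\overline x_k=x_k-\gamma\nabla f(x_k)-\gamma B^Tv_{k+1}$, so the surviving quadratic is exactly $\gamma\lVert \nabla\hat f(x_k)-\nabla f(x_k)\rVert_2^2$ (controlled by Lemma~\ref{lm1}) while the remaining cross term is discarded by unbiasedness under conditional expectation; your sketch needs to commit to this, or an equivalent, exact decomposition.

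Second, the factor $1-\rho_{\min}(BB^T)$. You attribute it to a cancellation between $\tfrac{\lambda}{\gamma}\lVert v_{k+1}-v_k\rVert^2$ and the cross term $\langle B(x_{k+1}-x_k),v_{k+1}-v_k\rangle$. That is not its origin, and no such cancellation occurs in the paper: the dual update is telescoped in the seminorm $\lVert\cdot\rVert_G$ with $G=\tfrac{\gamma}{2\lambda}(I-\lambda BB^T)$, the term $-\lVert v_{k+1}-v_k\rVert_G^2$ is simply dropped (it is nonpositive once $\lambda\rho_{\max}(BB^T)\le 1$), and the factor appears only at the very end, when the single boundary term $2\gamma\lVert\tilde v_s-v^*\rVert_G^2$ is bounded via the largest eigenvalue of $G$ by a multiple of $\lVert\tilde v_s-v^*\rVert_2^2$ and then by $D_{g^*}(\tilde v_s,v^*)$ through the $\mu_{g^*}$-strong convexity of $g^*$. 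That last step is the only place the strong convexity of $g^*$ is used, and it explains why the assumption can be dropped when $BB^T=I$ (then $G=0$), as in Corollary~\ref{thm11}. Without identifying $G$ as the metric in which the dual iterates telescope, the Lyapunov reorganization you propose will not reproduce the third term of $\kappa$.
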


In Theorem \textbf{\ref{thm1}}, we require strong convexity of $g^*(\cdot)$. This may not be the case for some real applications, for example  $g(x) = \lVert \cdot \rVert_1$. In this case, we can use its Moreau-Yosida smoothing Huber norm to get an approximate solution. The Huber smoothing norm is given as follows: 
\begin{equation}
\lVert x \rVert_{\alpha} = \left \{
\begin{aligned}
& \frac{x_j^2}{2\alpha}, \qquad |x_j| \leq \alpha \\
& |x_j| - \frac{\alpha}{2}, \qquad |x_j| > \alpha \\
& j = 1,\cdots,d
\end{aligned}   
\right.
\end{equation}
where $x_j$ is the $j$-th component of vector $x$ and $\alpha > 0$. \\
\indent Moreover if $BB^T = I$ where $I$ is then identity, the strong convexity assumption of $g^*(\cdot)$ can be omitted. The property is stated in the  following  Corollary \textbf{\ref{thm11}}.
\begin{corollary}\label{thm11}
Suppose Assumption \textbf{\ref{assmp1}} and \textbf{\ref{assmp2}} hold and  $BB^T = I$.  Let
\begin{equation}\label{thm11rate}
\kappa =  \frac{1}{\mu_f\gamma(1 - \gamma M)m} + \frac{(m + 1)\gamma M}{(1 - \gamma M)m}.
\end{equation}
Choose $0 < \gamma < \min\{ \beta,\frac{1}{M}\}$, $\lambda  = 1$ and $m$ such that $\kappa < 1$, we have
\begin{equation}
\mathbb{E}(R(\tilde{x}_s,\tilde{v}_s)) \leq \kappa^s R(\tilde{x}_0,\tilde{v}_0).
\end{equation}
\end{corollary}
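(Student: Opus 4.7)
The plan is to obtain Corollary 3.6 as a direct specialization of Theorem 3.4 rather than via a separate argument. The key observation is that in the rate (3.3), the term that depends on $\mu_{g^*}$ carries the multiplicative factor $1 - \rho_{\min}(BB^T)$, so it vanishes identically whenever $BB^T = I$. Accordingly, I would revisit the per-iteration estimate that underlies Theorem 3.4 and verify that strong convexity of $g^*$ enters at exactly one point, namely in order to absorb a cross term produced by the dual update.

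Concretely, the PDFP dual step $v_{k+1} = \mathrm{Prox}_{\frac{\lambda}{\gamma}g^*}\bigl(\frac{\lambda}{\gamma}By_{k+1} + v_k\bigr)$ yields an optimality condition involving the residual $(I - \lambda BB^T)(v_k - v^*)$. In the proof of Theorem 3.4 this residual appears inside a cross term of the form $\langle (I - \lambda BB^T)(v_k - v^*),\, v_{k+1} - v^*\rangle$, which is handled by a Young-type split and then absorbed into $D_{g^*}(v_{k+1},v^*)$ using $\mu_{g^*}$-strong convexity, producing exactly the third summand in the expression for $\kappa$ in (3.3). Setting $\lambda = 1$ and $BB^T = I$ collapses $I - \lambda BB^T$ to zero, so this cross term is already zero and needs no estimate at all; in particular the hypothesis $g^*$ strongly convex is not invoked. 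The choice $\lambda = 1$ is admissible since $1/\rho_{\max}(BB^T) = 1$, matching the constraint $0 < \lambda \le 1/\rho_{\max}(BB^T)$ inherited from Theorem 3.4.

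Once that single term is excised, every remaining estimate in the proof of Theorem 3.4 transfers verbatim. Lemma 3.3 controls the stochastic-gradient variance by $D_f(x_k,x^*) + D_f(\tilde x,x^*)$; the $\mu_f$-strong convexity of $f$ lets us bound $D_f(\tilde x,x^*)$ by $\tfrac{1}{2}\mu_f\|\tilde x-x^*\|^2$ from below and hence control the outer iterate; and the averaging $\tilde x_{s+1}=\tfrac{1}{m}\sum_{i=1}^m x_i$, $\tilde v_{s+1}=\tfrac{1}{m}\sum_{i=1}^m v_i$ together with convexity of the Bregman distances $D_f(\cdot,x^*)$ and $D_{g^*}(\cdot,v^*)$ yields the epoch-level contraction $\mathbb{E}(R(\tilde x_{s+1},\tilde v_{s+1})) \le \kappa\, R(\tilde x_s,\tilde v_s)$ with the new $\kappa$ in (3.5). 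Iterating in $s$ gives the announced geometric bound.

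The main obstacle is bookkeeping: one must confirm that the $\mu_{g^*}$-dependent step is the \emph{only} place where strong convexity of $g^*$ is used, and that all other appearances of $D_{g^*}$ rely on mere convexity (so that Jensen's inequality still applies to the averaged dual iterate). A secondary technical point is checking that the omitted cross term is genuinely zero, not merely small, so that no non-negativity requirement on intermediate quantities is violated and the one-step recursion remains valid term by term. Once these points are verified, the corollary follows by reading off the simplified $\kappa$ from the rate expression in Theorem 3.4.
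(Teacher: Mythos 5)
Your proposal is correct and is essentially the paper's own argument: the paper proves the corollary in one line by observing that $\lambda=1$ and $BB^T=I$ force $G=\frac{\gamma}{2\lambda}(I-\lambda BB^T)=0$, so the term $2\gamma\lVert\tilde v_s-v^*\rVert_G^2$ in the chain of inequalities for Theorem~\ref{thm1} vanishes and the strong convexity of $g^*$ (used only to bound that term) is never invoked. Your description of the mechanism as a Young-split cross term differs cosmetically from the paper's $G$-norm telescoping in Lemma~\ref{lm3}, but the substance — identifying the single $\mu_{g^*}$-dependent term and showing it is identically zero — is the same.
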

\begin{remark}\label{rem:}
\textbf{(Connection to Prox-SVRG \cite{ProxSVRG})}
If $B = I$,  $b = 1$ and $\lambda = 1$ in Algorithm 1, then SVRG-PDFP becomes Prox-SVRG and the convergence order and the  constant $\kappa$ in Theorem \textbf{\ref{thm11}} coincide with  the result in \cite{ProxSVRG}.
\end{remark}

\begin{remark}
(\textbf{Comparisons with SVRG-ADMM \cite{SVRGADMM}}) 
The linear convergence rate of SVRG-ADMM \cite{SVRGADMM} requires the assumption of  full row rank of $B$ but without strong convexity of $g^*(\cdot)$, compared to  SVRG-PDFP. And  the convergence constant $\kappa$ of SVRG-ADMM is 
\begin{equation}\label{ADMMrate}
\kappa =  \frac{1}{\mu_f\gamma(1 - \gamma M)m} + \frac{(m + 1) \gamma M}{(1 - \gamma M)m} + \frac{1}{\rho(1 - \gamma M)\rho_{min}(BB^T)m}
\end{equation}
where $\rho$ is the parameter on the augmented Lagrangian term. It can be seen that the only difference to the constant of SVRG-PDFP \textbf{(\ref{thmrate})} is  on the third term.
\end{remark}

\begin{remark}
(\textbf{Comparisons with SPDHG} \cite{SPDHG}) The conditions on the linear convergence rate of SPDHG also require  the strong convexity of both $f$ and $g^*$， which is the same as SVRG-PDFP. The advantage of SPDHG is that it does not need the Lipschitz continuous gradient of $f$, however the update of the primal variable  requires to solve the $\mathrm{Prox}$ operator of $f$. Thus if the computation of the $\mathrm{Prox}$ operator of $f$ is not easy and $f$ has Lipschitz continuous gradient,  SVRG-PDFP can be served as a good alternative.
\end{remark}

\subsection{Convergence of Algorithm 2}
\begin{theorem}\label{thm2}
Suppose Assumption \textbf{\ref{assmp1}} holds. Choose ${0 < \gamma \leq \min\{ \beta,\frac{1}{2M}\}}$ and ${0 < \lambda \leq \frac{1}{\rho_{max}(BB^T)}}$, we have
\begin{equation}\label{thm2eq}
\begin{aligned}
 \mathbb{E}(R(\overline{x}_T,\overline{v}_T)) 
& \leq \frac{\gamma ME}{(1 - 2\gamma M)mT},
\end{aligned}   
\end{equation}
where $\overline{x}_T$ is defined in Algorithm $2$, $\overline{v}_T = \frac{1}{T}\sum_{i = 1}^T\tilde{v}_i$ and  $E$ is a constant related to the initial point.
\end{theorem}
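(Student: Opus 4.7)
The plan is to derive a per-inner-iteration descent inequality, combine it with the variance bound of Lemma \ref{lm1}, then apply Jensen's inequality twice (once for the inner average $\tilde{x}_{s+1}$, once for the outer average $\overline{x}_T$) to extract the $\mathcal{O}(1/(mT))$ rate.

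First, I would produce a one-step inequality for the inner loop. Starting from the proximal optimality condition for $v_{k+1} = \mathrm{Prox}_{\frac{\lambda}{\gamma}g^*}(\tfrac{\lambda}{\gamma}By_{k+1}+v_k)$, one has, for every $v \in V$,
\begin{equation*}
\tfrac{\lambda}{\gamma}\bigl(g^*(v)-g^*(v_{k+1})\bigr) \geq \langle v-v_{k+1},\,\tfrac{\lambda}{\gamma}By_{k+1}+v_k-v_{k+1}\rangle.
\end{equation*}
Pairing this with convexity of $f$ (through the exact gradient, after adding and subtracting $\nabla\hat{f}(x_k)$), plugging in the explicit updates $y_{k+1}=x_k-\gamma\nabla\hat{f}(x_k)-\gamma B^Tv_k$ and $x_{k+1}=x_k-\gamma\nabla\hat{f}(x_k)-\gamma B^Tv_{k+1}$, and evaluating at $(x,v)=(x^*,v^*)$, a standard completion-of-squares argument should produce an inequality of the form
\begin{equation*}
R(x_{k+1},v_{k+1}) + \tfrac{1}{2\gamma}\|x_{k+1}-x^*\|^2 + \tfrac{\gamma}{2\lambda}\|v_{k+1}-v^*\|^2 \leq \tfrac{1}{2\gamma}\|x_k-x^*\|^2 + \tfrac{\gamma}{2\lambda}\|v_k-v^*\|^2 + \gamma\|\nabla\hat{f}(x_k)-\nabla f(x_k)\|^2,
\end{equation*}
where the condition $\lambda\rho_{max}(BB^T)\leq 1$ is used to dominate a cross term $\|\lambda BB^T(v_{k+1}-v_k)\|^2$ by $\|v_{k+1}-v_k\|^2$, and the condition $\gamma\leq\beta$ lets the Lipschitz quadratic $\tfrac{1}{2\beta}\|x_{k+1}-x_k\|^2$ be absorbed.

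Second, I would take expectations and invoke Lemma \ref{lm1} to replace the variance by $C(b)\bigl(D_f(x_k,x^*)+D_f(\tilde{x}_s,x^*)\bigr)$. Since $D_f(x,x^*)\leq R(x,v)$, and noting $M=4L_{max}C(b)$, the resulting $R(x_k,v_k)$ term can be moved to the left using $\gamma\leq\frac{1}{2M}$. Summing over $k=0,1,\dots,m-1$ telescopes the distance terms, yielding
\begin{equation*}
(1-2\gamma M)\sum_{k=1}^{m}\mathbb{E}[R(x_k,v_k)] + \tfrac{1}{2\gamma}\mathbb{E}\|\hat{x}_{s+1}-x^*\|^2 + \tfrac{\gamma}{2\lambda}\mathbb{E}\|\hat{v}_{s+1}-v^*\|^2 \leq \tfrac{1}{2\gamma}\mathbb{E}\|\hat{x}_s-x^*\|^2 + \tfrac{\gamma}{2\lambda}\mathbb{E}\|\hat{v}_s-v^*\|^2 + \gamma M m\,\mathbb{E}[R(\tilde{x}_s,\tilde{v}_s)].
\end{equation*}
By convexity of the Bregman distances in their first arguments, Jensen yields $R(\tilde{x}_{s+1},\tilde{v}_{s+1})\leq \tfrac{1}{m}\sum_{k=1}^m R(x_k,v_k)$, which turns this into a per-outer-iteration recursion on the Lyapunov quantity $\Phi_s := \tfrac{1}{2\gamma}\|\hat{x}_s-x^*\|^2 + \tfrac{\gamma}{2\lambda}\|\hat{v}_s-v^*\|^2$.

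Finally, summing this outer recursion over $s=0,\dots,T-1$, the $\Phi_s$ terms telescope and a careful index shift between $R(\tilde{x}_{s+1})$ on the left and $R(\tilde{x}_s)$ on the right yields $(1-2\gamma M)m\sum_{s=1}^{T}\mathbb{E}[R(\tilde{x}_s,\tilde{v}_s)]$ bounded by $\Phi_0 + \gamma M m\, R(\tilde{x}_0,\tilde{v}_0)$ (up to absorbing the residual $R(\tilde{x}_s)$ terms on the right using $\gamma M<1$). Applying Jensen once more to $\overline{x}_T = \tfrac{1}{T}\sum_i\tilde{x}_i$ gives $R(\overline{x}_T,\overline{v}_T)\leq\tfrac{1}{T}\sum_{s=1}^T R(\tilde{x}_s,\tilde{v}_s)$, and packaging $\Phi_0$ and $R(\tilde{x}_0,\tilde{v}_0)$ into the initial-point constant $E$ yields the claimed $\tfrac{\gamma M E}{(1-2\gamma M)mT}$ bound. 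The main obstacle is the one-step inequality: one must carefully account for the dual update using $y_{k+1}$ (not $x_{k+1}$) and exploit $\lambda\rho_{max}(BB^T)\leq 1$ to tame the resulting cross term; once this is done, the rest is a bookkeeping exercise in two nested telescopings with Jensen at each level.
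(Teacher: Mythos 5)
Your proposal follows essentially the same route as the paper: a one-step primal--dual inequality (the paper's Lemmas \ref{lm2} and \ref{lm3} combined), the variance bound of Lemma \ref{lm1}, inner telescoping plus Jensen to pass to $\tilde{x}_{s+1}$, then an outer telescoping of a Lyapunov quantity (the paper's $T_s$, which explicitly carries the $D_f(\hat{x}_s,x^*)$ and $mD_f(\tilde{x}_s,x^*)$ terms you propose to ``absorb'') and a final Jensen step for $\overline{x}_T$. The details you leave implicit --- the split $1-\gamma M=(1-2\gamma M)+\gamma M$ that makes the snapshot term telescope under $\gamma\leq\frac{1}{2M}$, and the $G$-weighted dual norm with $G=\frac{\gamma}{2\lambda}(I-\lambda BB^T)\succeq 0$ --- are exactly how the paper closes the argument.
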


\begin{corollary}
If we let $b = n$, SVRG-PDFP reduces to PDFP and the ergodic convergence rate of PDFP is $\mathcal{O}(1/k)$.
\end{corollary}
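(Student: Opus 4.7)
The plan is to proceed in two steps: first verify that Algorithm 2 literally becomes deterministic PDFP when $b=n$, and then specialize the proof of Theorem \textbf{\ref{thm2}} to the zero-variance regime to read off the rate.

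I would first check the algorithmic reduction. With $b=n$ the only possible minibatch is $I_k=\{1,\dots,n\}$, so
\begin{equation*}
\nabla \hat f(x_k) = \frac{1}{n}\sum_{i=1}^{n}\bigl(\nabla f_i(x_k)-\nabla f_i(\tilde x)\bigr)+\tilde z = \nabla f(x_k),
\end{equation*}
and every inner step of Algorithm 2 matches the PDFP iteration in \textbf{(\ref{eq14})}. Because the next outer block is warm-started at $\hat x_{s+1}=x_m^{(s)}$ and $\hat v_{s+1}=v_m^{(s)}$, the concatenated sequence $(x_j,v_j)_{j=1}^{mT}$ is a single PDFP trajectory of total length $k=mT$, and the output
\begin{equation*}
\overline{x}_T = \frac{1}{T}\sum_{s=1}^{T}\tilde x_s = \frac{1}{mT}\sum_{j=1}^{mT} x_j
\end{equation*}
is exactly its ergodic average (similarly for $\overline{v}_T$).

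For the rate I would revisit the proof of Theorem \textbf{\ref{thm2}}. With $b=n$ we have $C(b)=\frac{4(n-b)L_{\max}}{b(n-1)}=0$, so $M=0$ and the variance bound of Lemma \textbf{\ref{lm1}} collapses to the trivial equality $\nabla\hat f(x_k)=\nabla f(x_k)$. The per-step descent inequality underlying Theorem \textbf{\ref{thm2}} therefore retains only a clean telescoping term in a Lyapunov quantity of the form $\Phi_j=\tfrac{1}{2\gamma}\lVert x_j-x^*\rVert^2+\tfrac{1}{2\lambda}\lVert v_j-v^*\rVert^2$. Summing for $j=1,\dots,k$, dividing by $k$, and applying Jensen's inequality to $R(\cdot,\cdot)$ (which is convex as a sum of Bregman distances) yields $\mathbb{E}(R(\overline{x}_T,\overline{v}_T))\leq C/k$ for a constant $C$ depending only on $(x_0,v_0,x^*,v^*,\gamma,\lambda)$.

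The chief subtlety to watch for is that a naive substitution $M=0$ into the packaged bound \textbf{(\ref{thm2eq})} gives the indeterminate form $0/0$. This is a cosmetic artifact: the factor of $M$ in the numerator arose because the proof of Theorem \textbf{\ref{thm2}} absorbs the variance contribution into a single multiplicative $M$-term, and in the deterministic case that contribution is absent from the outset. Substituting $M=0$ at the level of the intermediate descent inequality, rather than in the packaged bound, produces a finite $\mathcal{O}(1/k)$ upper bound. As a consistency check, the stepsize restriction $0<\gamma\leq\min\{\beta,1/(2M)\}$ correctly degenerates to $0<\gamma\leq\beta$, which is precisely the admissible stepsize range for deterministic PDFP.
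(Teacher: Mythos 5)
Your proposal is correct and follows essentially the same route the paper intends: the corollary is stated without a separate proof precisely because it is the specialization of Theorem \textbf{\ref{thm2}} to $b=n$, where $C(b)=0$, $M=0$, $\nabla\hat f(x_k)=\nabla f(x_k)$, and the warm-started inner loops concatenate into a single PDFP trajectory whose ergodic average is $\overline{x}_T$. Your observation that one must substitute $M=0$ inside the intermediate inequality (since $E$ contains $1/M$ terms, the product $\gamma ME$ tends to the finite limit $\tfrac{1}{2\gamma}\lVert\hat x_0-x^*\rVert_2^2+\lVert\hat v_0-v^*\rVert_G^2$ rather than producing a genuinely vanishing bound) is a correct and worthwhile clarification of a point the paper leaves implicit.
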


\begin{remark}
The convergence rate of SVRG-PDFP is the same as SVRG-ADMM and SPDHG for the general convex function.
\end{remark}

Table \textbf{\ref{cgsum}} summarizes the convergence results of SVRG-PDFP, SVRG-ADMM\cite{SVRGADMM}, SPDHG\cite{SPDHG} based on the following conditions:
\begin{itemize}
    \item Strong convexity of $f(x)$ (S.C. $f_(x)$).
    \item Strong convexity of $g^*(x)$ (S.C. $g^*(x)$).
    \item Full row rank of matrix $B$ (FrkB).
    \item Lipschitz continuous gradient of $f(x)$ (Lip).
    \item Convergence rate (Cg rate).
    \item Need to compute $\mathrm{Prox}_f(\cdot)$ ($\mathrm{Prox}_f(\cdot)$).
\end{itemize}
\begin{table}[htp]
\caption{Summary of convergence results of SVRG-PDFP, SVRG-ADMM, SPDHG}
\centering
\begin{tabular}{|c|c|c|c|c|c|c|c|c|c|c|c|c|c|}
\hline
 \multicolumn{2}{|c|}{Algorithms}
&\multicolumn{1}{|c|}{Cg rate}
& \multicolumn{2}{|c|}{S.C. $f_(x)$}
& \multicolumn{2}{|c|}{S.C. $g^*(x)$}
& \multicolumn{2}{|c|}{FrkB}
& \multicolumn{2}{|c|}{Lip}
& \multicolumn{2}{|c|}{$\mathrm{Prox}_f(\cdot)$}
 \\
\hline
\multicolumn{2}{|c|}{SVRG-PDFP}
&\multirow{3}*{Linear} 
& \multicolumn{2}{|c|}{$\checkmark$}
& \multicolumn{2}{|c|}{$\checkmark$}
& \multicolumn{2}{|c|}{$-$}
& \multicolumn{2}{|c|}{$\checkmark$}
& \multicolumn{2}{|c|}{$-$}
\\
 \multicolumn{2}{|c|}{SVRG-ADMM}&
& \multicolumn{2}{|c|}{$\checkmark$}
& \multicolumn{2}{|c|}{$-$}
& \multicolumn{2}{|c|}{$\checkmark$}
& \multicolumn{2}{|c|}{$\checkmark$}
& \multicolumn{2}{|c|}{$-$}\\
\multicolumn{2}{|c|}{SPDHG}&
& \multicolumn{2}{|c|}{$\checkmark$}
& \multicolumn{2}{|c|}{$\checkmark$}
& \multicolumn{2}{|c|}{$-$}
& \multicolumn{2}{|c|}{$-$}
& \multicolumn{2}{|c|}{$\checkmark$}
\\
\hline
 \multicolumn{2}{|c|}{SVRG-PDFP}
&\multirow{3}*{$\mathcal{O}(1/k)$} 
& \multicolumn{2}{|c|}{$-$}
& \multicolumn{2}{|c|}{$-$}
& \multicolumn{2}{|c|}{$-$}
& \multicolumn{2}{|c|}{$\checkmark$}
& \multicolumn{2}{|c|}{$-$}\\
\multicolumn{2}{|c|}{SVRG-ADMM}&
& \multicolumn{2}{|c|}{$-$}
& \multicolumn{2}{|c|}{$-$}
& \multicolumn{2}{|c|}{$-$}
& \multicolumn{2}{|c|}{$\checkmark$}
& \multicolumn{2}{|c|}{$-$}\\
 \multicolumn{2}{|c|}{SPDHG}&
& \multicolumn{2}{|c|}{$-$}
& \multicolumn{2}{|c|}{$-$}
& \multicolumn{2}{|c|}{$-$}
& \multicolumn{2}{|c|}{$-$}
& \multicolumn{2}{|c|}{$\checkmark$}\\
\hline
\end{tabular}
\label{cgsum}
\end{table}

\section{Numerical Experiments.}
In this section, we show the numerical performance of the proposed algorithm. First we consider the graph guide logistic regression model \cite{Kim} and  compare SVRG-PDFP with other stochastic ADMM-type algorithms on two data sets. Then we present the results with  TV-$L_2$ model for image reconstruction of 2D and 3D images.
\subsection{Graph Guide Logistic Regression.}
The graph guide logistic regression model \cite{Kim} considers the following minimization problem
\begin{equation}\label{GGLGT1}
\underset{x \in \mathbb{R}^d}{\min}\frac{1}{n}\sum_{i = 1}^{n}f_i(x) + \nu_1\lVert x \rVert_2^2+ \nu_2\lVert Bx \rVert_1,  
\end{equation}
where $f_i(x) = \log(1 + \exp(-b_ia_i^Tx)),i = 1,2,\cdots,n$ and $b_i \in \{-1, 1 \}$ is the label of the sample $a_i \in \mathbb{R}^d$. As in \cite{STOCADMM,SVRGADMM,SCASADMM,SAGADMM}, we explore the graphical structure of the samples to prevent overfitting and use sparse inverse covariance selection \cite{Banerjee2008model} for the graph matrix $G$ and let  $B = [G;I]$.
The following are the details of the experiments:
\begin{itemize}
  \item Two data sets \textbf{a9a}: (54 features and 581012 samples) and \textbf{covtype} (123 features and 32561 samples) from \textbf{LIBSVM} \cite{CC01a} are used.  Half of the data are used for training and the other half for testing.
  \item A true solution  of (\textbf{\ref{GGLGT1}}) is obtain by running PDFP for $10000$ iterations so that the convergence of the problem is observed.
  \item We compare SVRG-PDFP with SCAS-ADMM \cite{SCASADMM}, OPG-ADMM \cite{RDAOPGADMM},  PDFP \cite{PDFP}, SPDFP \cite{ZZ20} and SVRG-ADMM \cite{SVRGADMM}. 
  \item For data set \textbf{a9a} the batch size is set as $b = 200$ for SPDFP, SCAS-ADMM, OPG-ADMM and $b = 20$ for SVRG-ADMM and SVRG-PDFP. For data set \textbf{covtype} the batch size is $b = 1000$ for SPDFP, SCAS-ADMM, OPG-ADMM and $b = 100$ for SVRG-ADMM and SVRG-PDFP. The batch size is chosen for the  algorithms  to have the best performance in terms of time. 
  \item The experiment is terminated when the relative error to the true solution is less than $1e^{-4}$ or reach the maximum iteration number.
  \item All the algorithms were run 10 times and the averaged is reported.
\end{itemize}


\begin{figure}[htp]
\begin{center}
\subfigure[a9a]{
\centering    
\includegraphics[width=0.45\columnwidth]{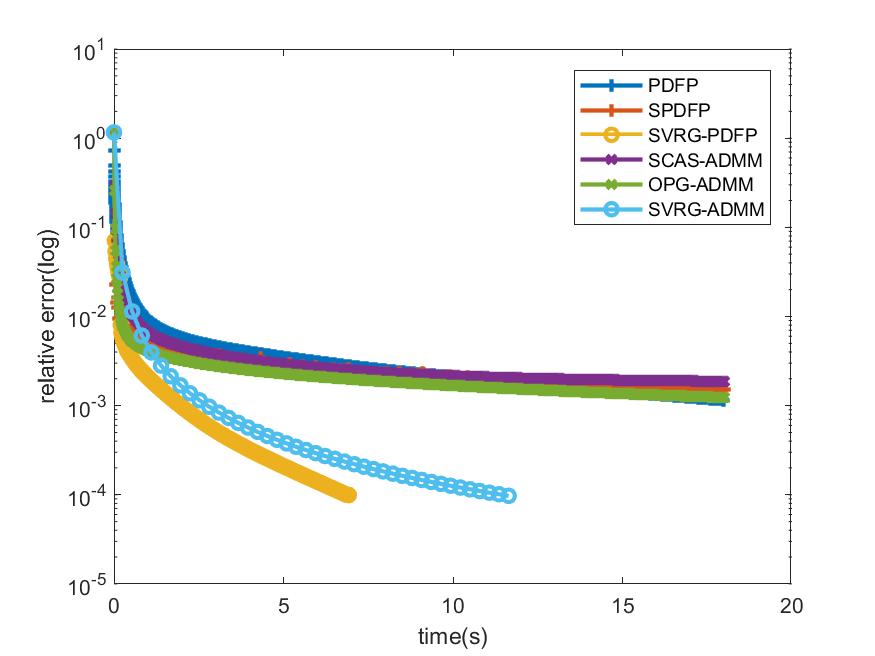}           
} 
\subfigure[covtype]{
\centering    
\includegraphics[width=0.45\columnwidth]{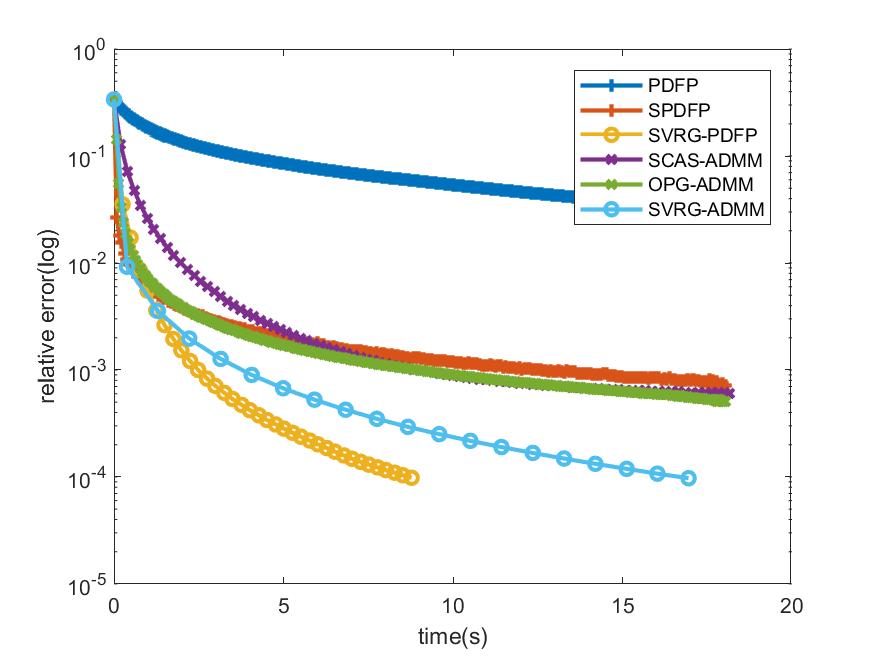}          
} 
\end{center}
\caption{Averaged relative error of objective value \textcolor{red}{vs.} time(s) over 10 independent repetitions.}
\label{GGlgt1}
\end{figure}

\begin{figure}[htp]
\begin{center}
\subfigure[a9a]{
\centering    
\includegraphics[width=0.45\columnwidth]{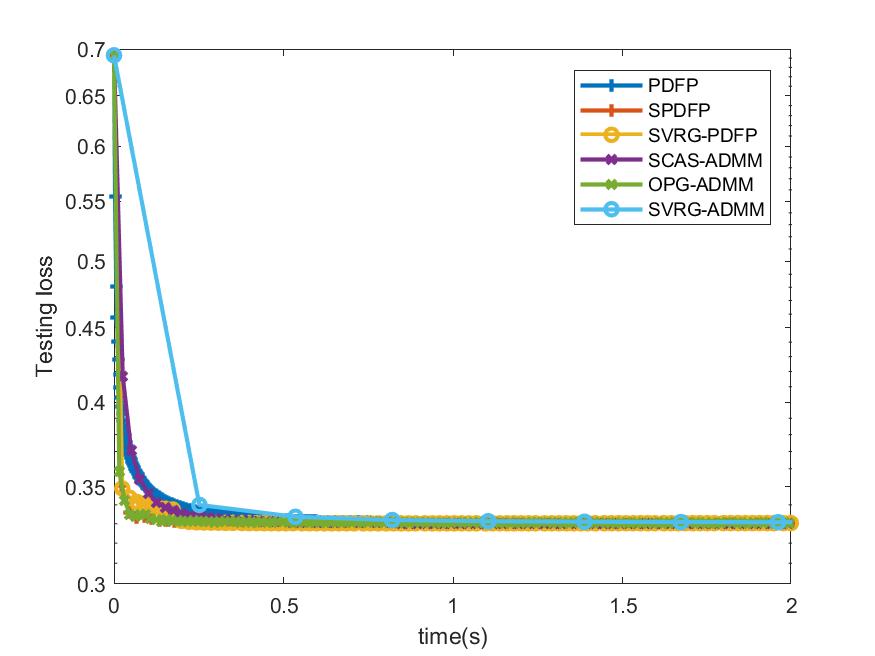}           
} 
\subfigure[covtype]{
\centering    
\includegraphics[width=0.45\columnwidth]{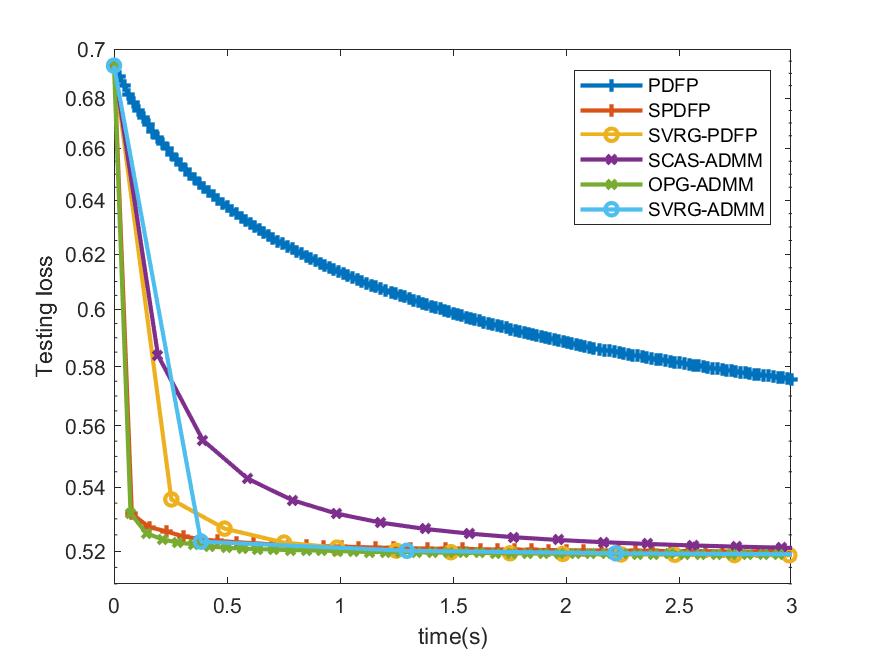}          
} 
\end{center}
\caption{Averaged testing loss \textcolor{red}{vs.} time(s) over 10 independent repetitions.}
\label{GGlgt2}
\end{figure}
 FIG \textbf{\ref{GGlgt1}}-\textbf{\ref{GGlgt2}} give the relative error to the minimum objective value on the  training sample and the testing loss of the two data set over time respectively. It can be seen that stochastic algorithms are generally better than deterministic algorithms (see FIG 1 (b)). Both SVRG-PDFP and SVRG-ADMM achieve a high accuracy solution faster than the other stochastic algorithm as SVRG based algorithms allow to use a constant step size while the other stochastic algorithms uses a diminishing step size. Compared with SVRG-ADMM, SVRG-PDFP performs slightly better on the relative error.  In term of testing loss, SVRG-PDFP is comparable to ADMM type algorithms. 



\subsection{Computerized tomography reconstruction.}
In this subsection we consider the computerized tomography reconstruction (CT) using TV-$L_2$ model i.e.
\begin{equation}\label{CTeq1}
\underset{x \in \mathbb{R}^d}{\arg\min}~\lVert \mathcal{A}x - f \rVert_2^2 + \nu\lVert \nabla x \rVert_1.     
\end{equation} 
Here $x$ is a vectorized image i.e. an $s_1 \times s_2$ 2D image is stacked into a $d = s_1 \times s_2$ dimensional column vector (3D case is  similar). The operator $\mathcal{A}$ is the X-ray transform, $f \in \mathbb{R}^n$ is the measured projections, $\nu > 0$ is a regularization parameter,  and $\nabla $ is the discrete gradient operator. The dimension of the operator $\mathcal{A}$ is generally very large, so traditionally we can use parallelization to compute the gradient of $\lVert \mathcal{A}x - f \rVert_2^2$ \cite{Xray}. We use this example to verify if stochastic algorithms can further reduce the computation cost.

\subsubsection{2D case.}
The follows are the settings of the experiment in 2D:
\begin{itemize}
    \item For operator $\mathcal{A}$, we use fan beam scanning geometry \cite{Xray} where the number of detectors is $nd = 512$, the number of viewers $nv = 360$. Thus the dimension of $f$ is $n = nd*nv = 184320$. 
    \item White noise with mean 0 variance 0.1 is added to the measured projection $f$.
    \item The proposed  SVRG-PDFP algorithm is compared with PDFP \cite{PDFP}, Stochastic PDFP (SPDFP) without SVRG \cite{ZZ20}, OPG-ADMM \cite{RDAOPGADMM}, and  SVRG-ADMM \cite{SVRGADMM}.
    \item The number of viewers is divided into $\frac{360}{nvb}$ non-overlap blocks(the number of viewers in each block is $nvb$). This yields that the batch size is $b = nvb * nd$. We choose $nvb = 20$ for OPG-ADMM, SPDFP and SCAS-ADMM and $nvb = 15$ for both SVRG-ADMM and SVRG-PDFP. 
    \item All the algorithms are terminated when they reach the maximum epoch number(effective pass).
    \item The experiment is performed on two different devices: NVIDIA GeForce GTX 1050 Ti GPU with 768 Cuda cores and TITAN RTX GPU with 4608 cores. The version of Matlab is 2018b. The comparisons of different algorithms on different devices will be reported. 
\end{itemize}  


\begin{figure}[htbp]
\centering
\subfigure[PSNR (TITAN RTX)]{
\centering
\includegraphics[width = 2.3 in]{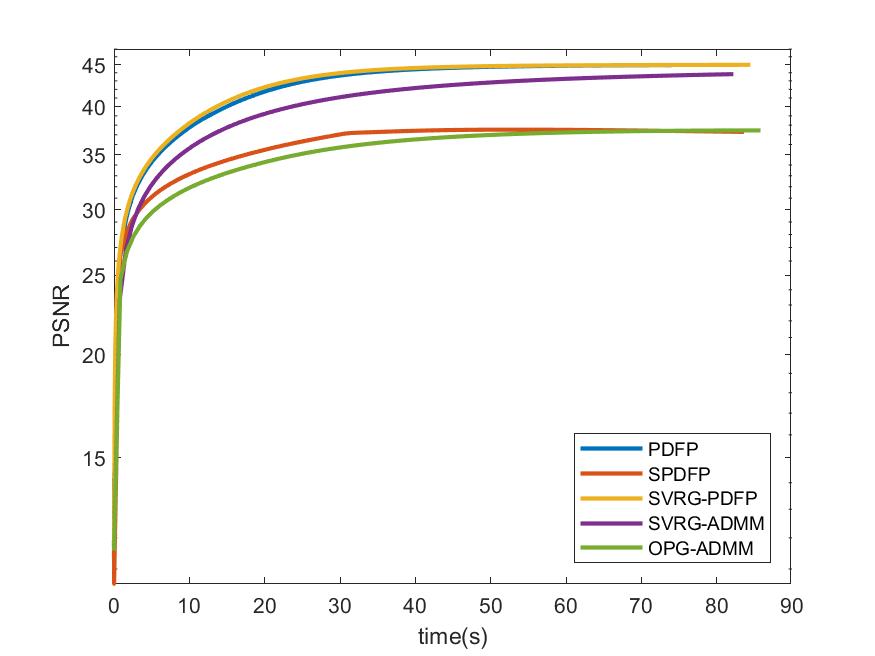}
}\hspace{1mm}
\subfigure[PSNR (GTX 1050 Ti)]{
\centering
\includegraphics[width = 2.3 in]{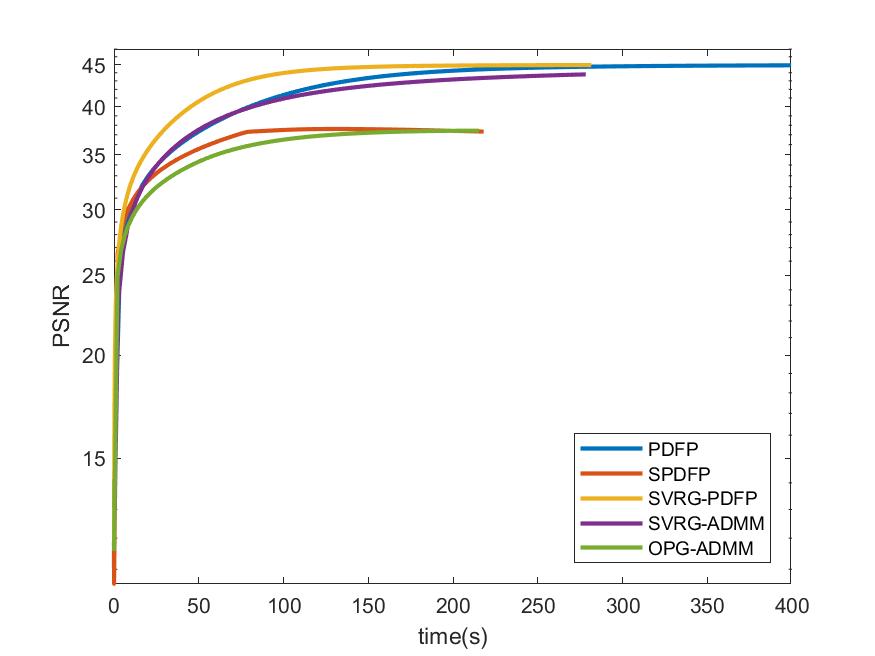}
}\hspace{1mm}
\centering
\caption{Different method for reconstructing image over 10 repetitions.}
\label{CT2-1}
\end{figure}

\begin{figure}[htbp]
\centering
\subfigure[PSNR = 30]{
\centering
\includegraphics[width = 2.2 in]{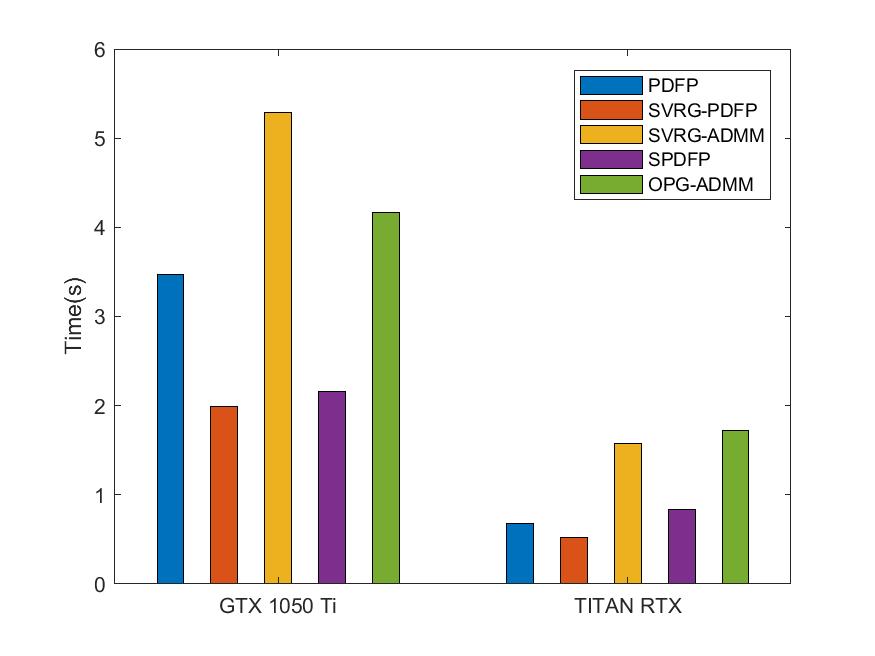}
}
\subfigure[PSNR = 35]{
\centering
\includegraphics[width = 2.2 in]{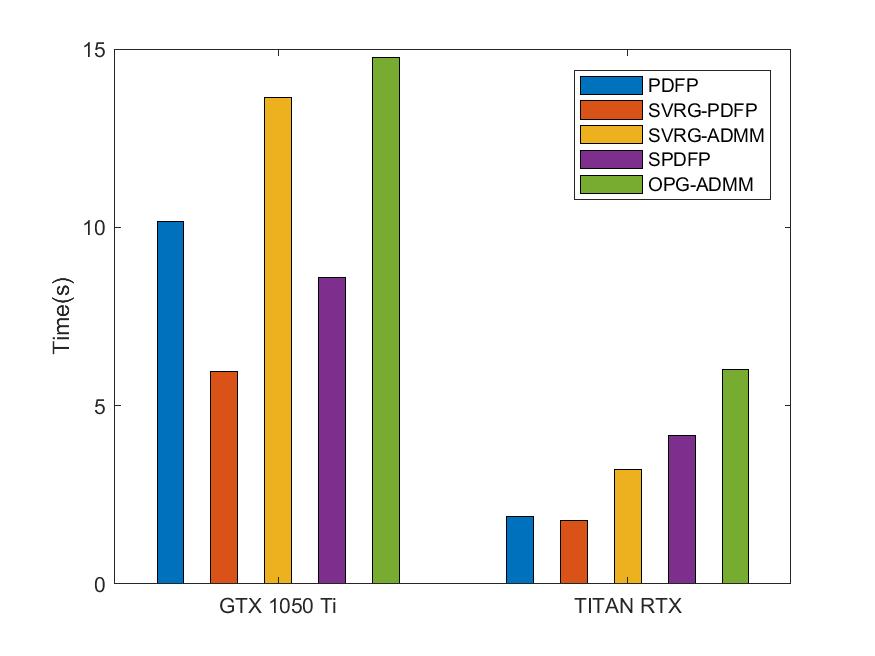}
}
\subfigure[PSNR = 37]{
\centering
\includegraphics[width = 2.2 in]{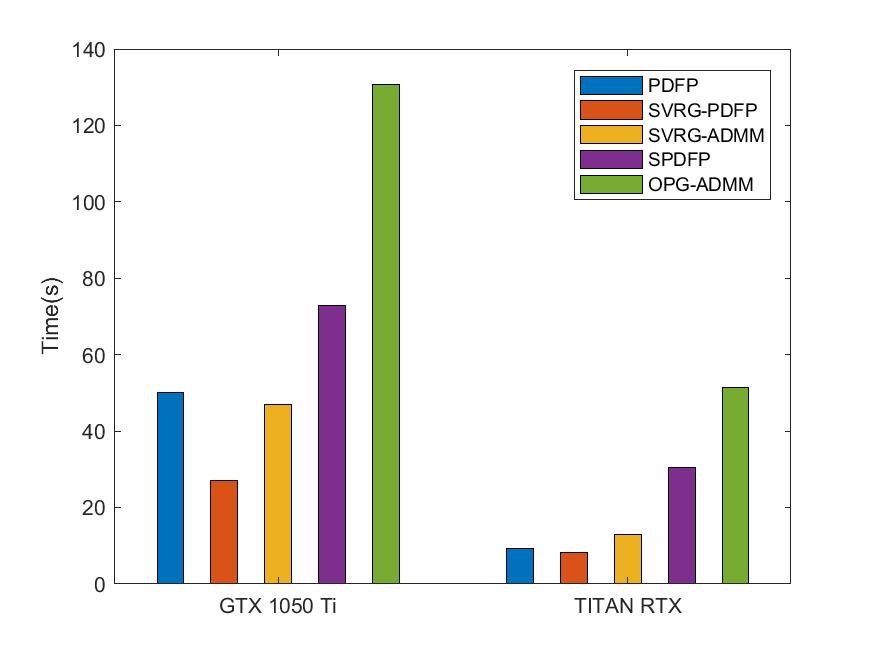}
}
\subfigure[PSNR = 43]{
\centering
\includegraphics[width = 2.2 in]{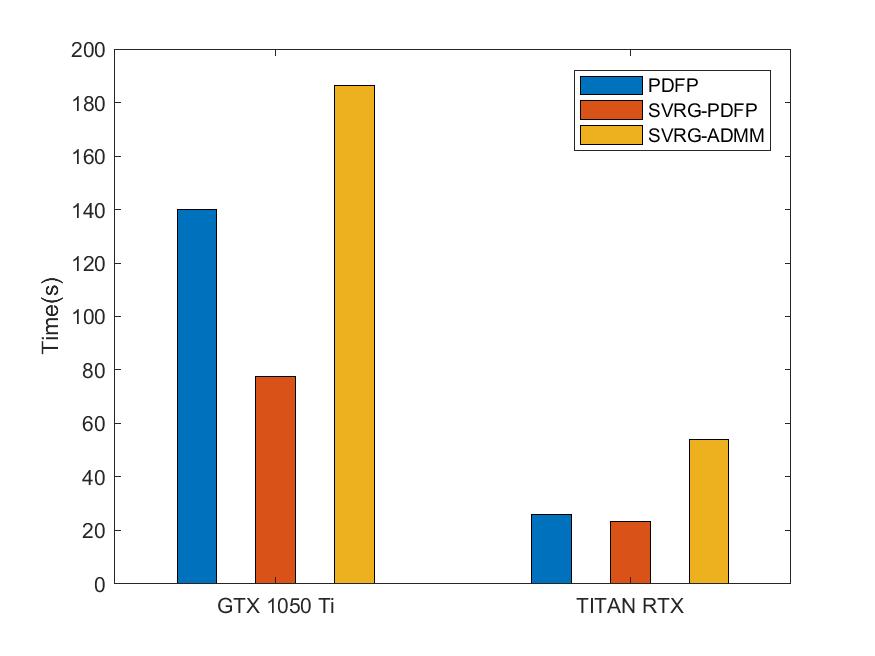}
}
\centering
\caption{Different method for reconstructing image over 10 repetitions.}
\label{CT2-11}
\end{figure}

\begin{figure}[htbp]
\centering
\subfigure[Ground truth]{
\centering
\includegraphics[width = 1.4 in]{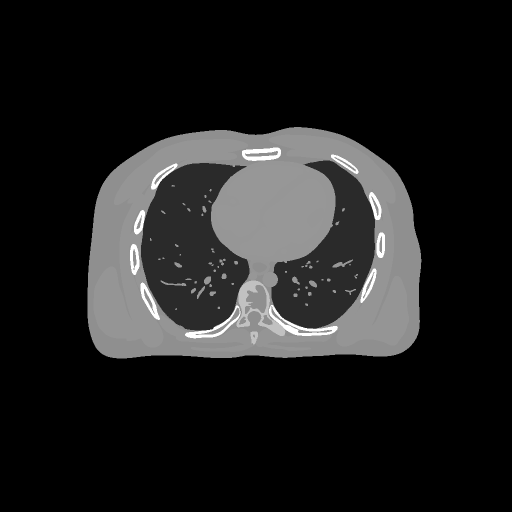}
}\hspace{1mm}
\subfigure[PDFP, PSNR= 44.94]{
\centering
\includegraphics[width = 1.4 in]{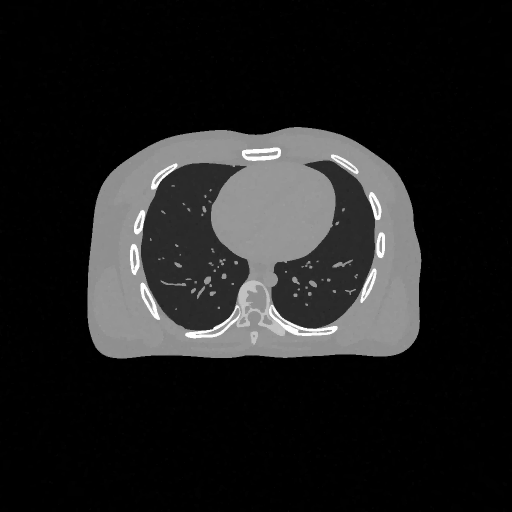}
}\hspace{1mm}
\subfigure[SPDFP, PSNR= 37.29]{
\centering
\includegraphics[width = 1.4 in]{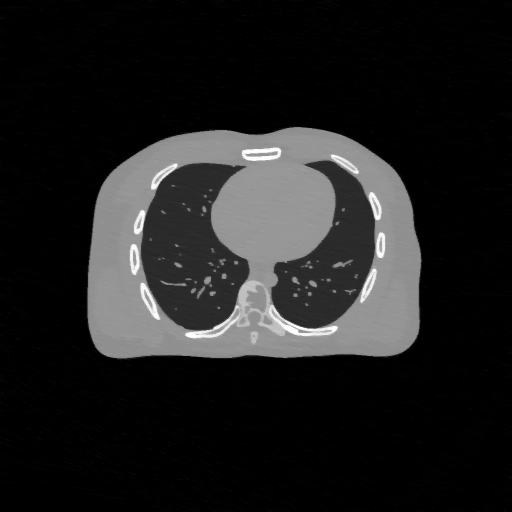}
}
\subfigure[SVRG-PDFP, PSNR= 44.98]{
\centering
\includegraphics[width = 1.4 in]{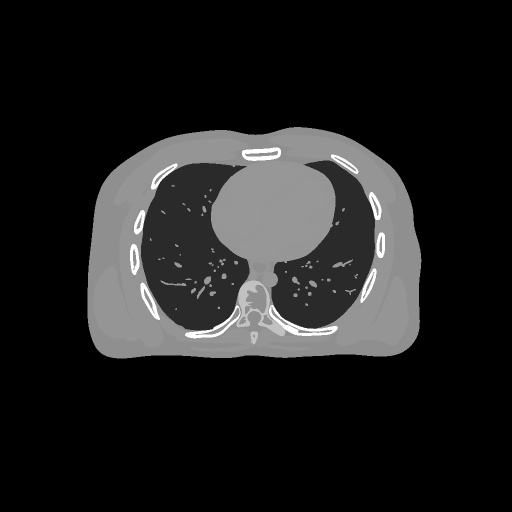}
}\hspace{1mm}
\subfigure[OPG-ADMM, PSNR= 37.45 ]{
\centering
\includegraphics[width = 1.4 in]{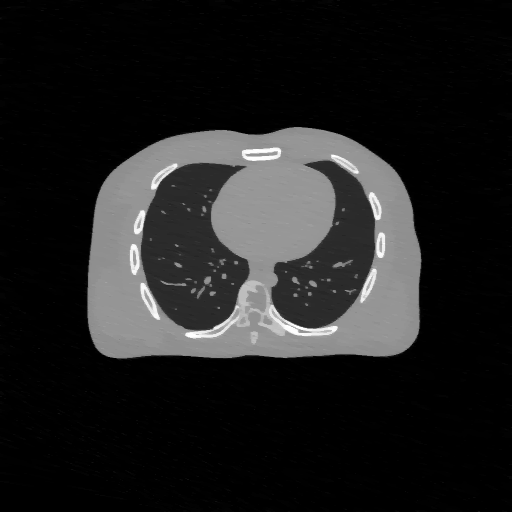}
}\hspace{1mm}
\subfigure[SVRG-ADMM, PSNR= 43.82]{
\centering
\includegraphics[width = 1.4 in]{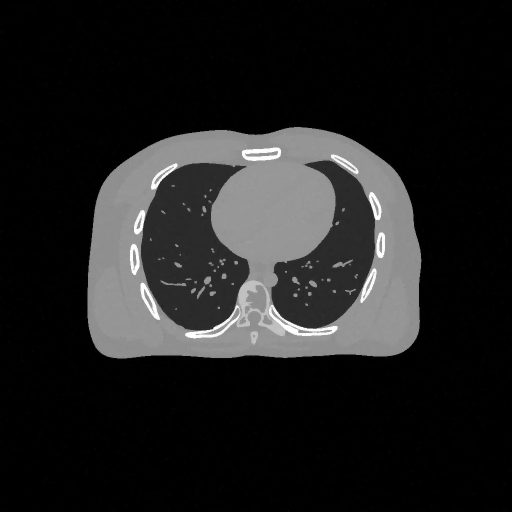}
}
\centering
\caption{ Reconstructed image (averaged) with different methods over 10 repetitions.}
\label{CT2-2}
\end{figure}

FIG \textbf{\ref{CT2-1}} give the results of the peak signal-to-noise ratio (PSNR) of the reconstructed images over time on two devices. It can be seen that stochastic algorithms without SVRG can not get high  PSNR comapred to  that with SVRG and the full batch PDFP.  The performance of SVRG-PDFP is as good as  PDPF with the device TITAN RTX while SVRG-PDFP behaves the best with  the devices with less cores. FIG \textbf{\ref{CT2-11}} record the computational time of different algorithms when PSNRs reach $30,35,37,43$ on the two different devices. We can see that when the computational resource is powerful (with many parallel cores), the full-batch PDFP can be highly parallized and the stochastic algorithm does not gain in general. However, when the cores number is not very high,  stochastic algorithms with SVRG are beneficial compared to deterministic algorithms.
FIG \textbf{\ref{CT2-2}} gives the reconstructed images with different algorithms and we can see that the one with SVRG-PDFP achieves the highest PSNR as the full batch PDFP.
\subsubsection{3D case.}
Here we also consider the 3D case as the number of unknowns and data are considerably  larger than 2D case. The follows are some difference to the settings of 3D case:
\begin{itemize}
\item {The size of image is $256*256*64$.}
    \item For the operator $\mathcal{A}$, we use cone beam scanning geometry \cite{Xray} where the parameter of detectors plane is $na \times nb = 512 \times 384$, and the number of viewers $nv = 668$. Thus the dimension of $f$ is $n = na*nb*nv = 131334144$ which is much larger than 2D case.
    \item The number of viewers is divided into $\frac{360}{nvb}$ non-overlap blocks and the number of viewers in each block is $nvb$. We set $nvb = 4$ for all the algorithms, i.e.  the batch size is $b = na * nb * nvb = 786432$.
   \end{itemize}

\begin{figure}[htbp]
\centering
\subfigure[PSNR (TITAN RTX)]{
\centering
\includegraphics[width = 2.3 in]{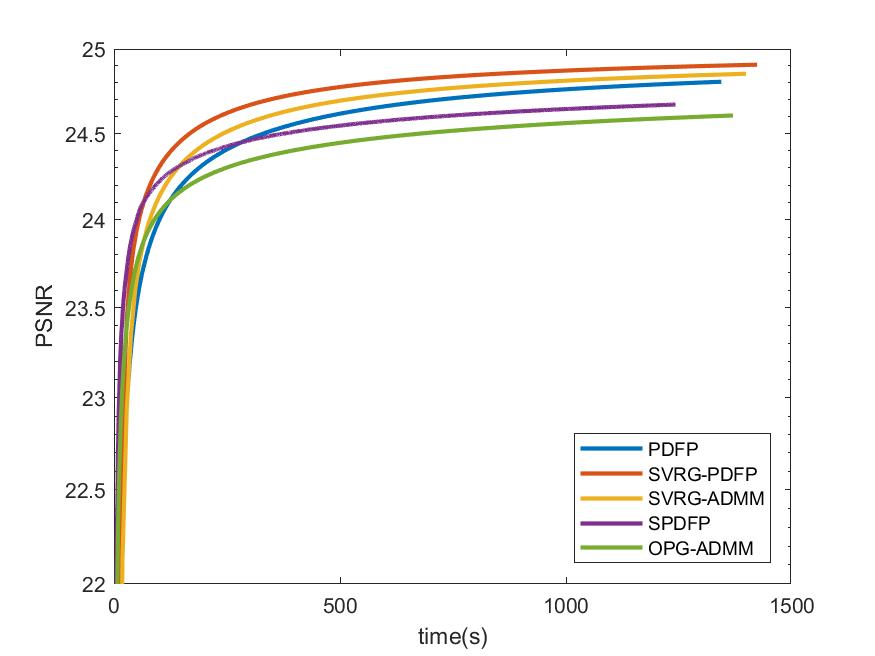}
}
\subfigure[PSNR (GTX 1050 Ti)]{
\centering
\includegraphics[width = 2.3 in]{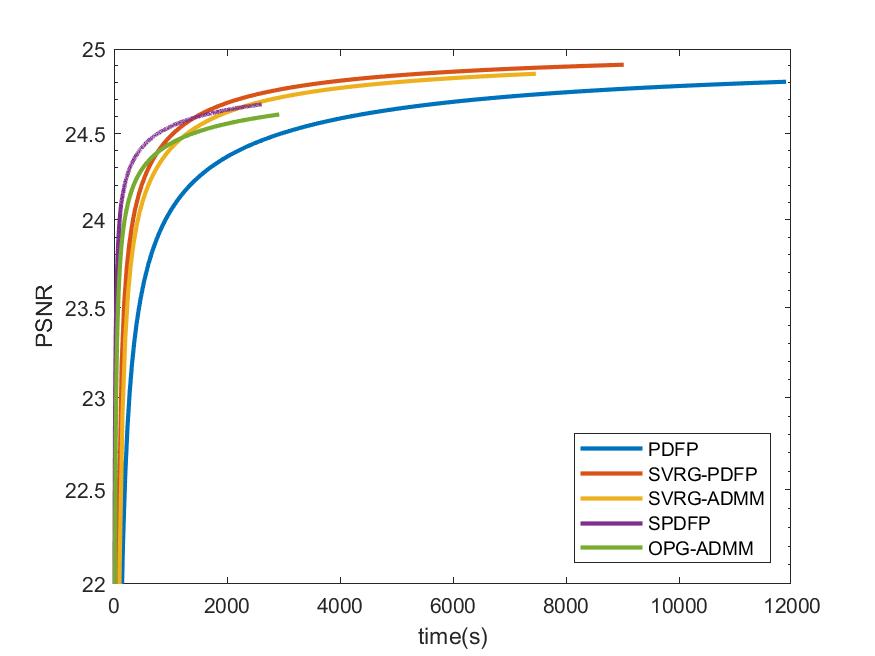}
}
\centering
\caption{PSNR for Different methods over 10 repetitions.}
\label{CT3-1}
\end{figure}

\begin{figure}[htbp]	
\centering
\subfigure[PSNR = 22]{
\centering
\includegraphics[width = 2.2 in]{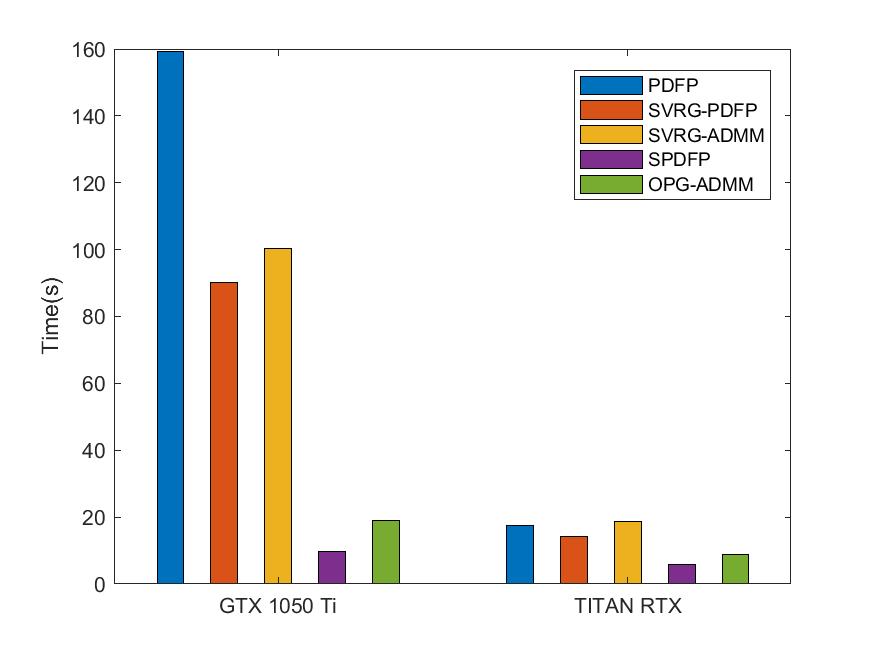}
}\hspace{1mm}
\subfigure[PSNR = 24]{
\centering
\includegraphics[width = 2.2 in]{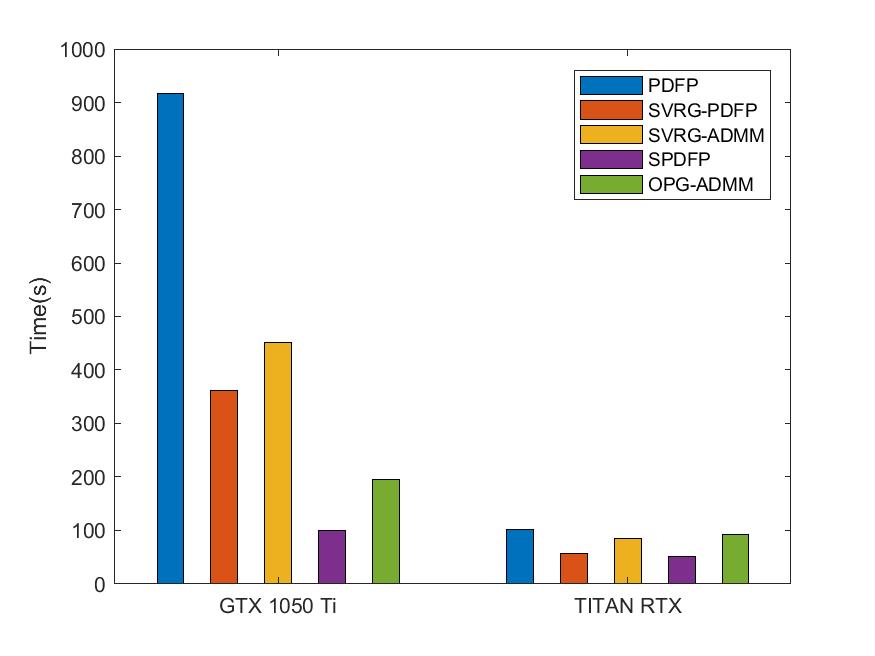}
}
\subfigure[PSNR = 24.6]{
\centering
\includegraphics[width = 2.2 in]{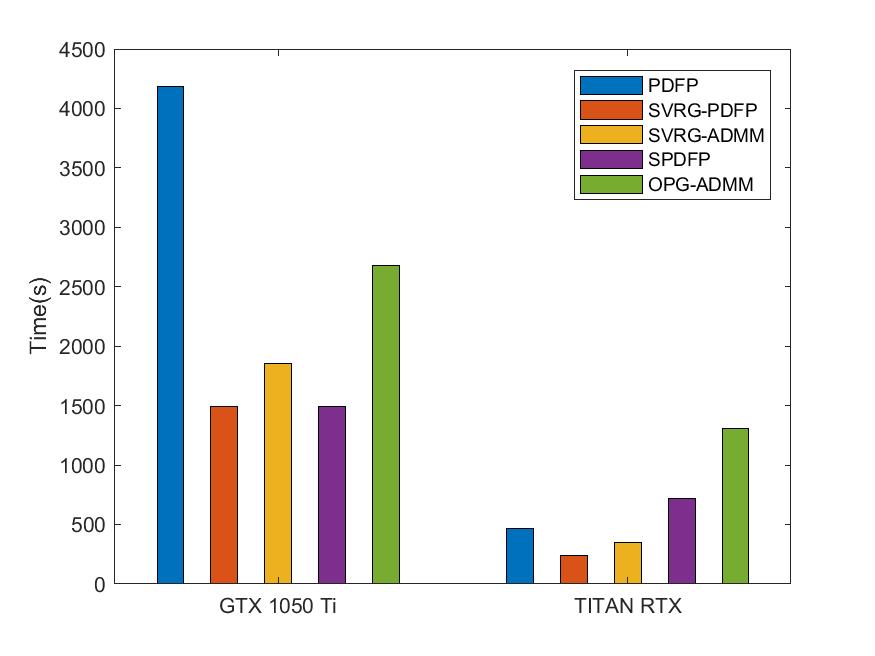}
}\hspace{1mm}
\subfigure[PSNR = 24.87]{
\centering
\includegraphics[width = 2.2 in]{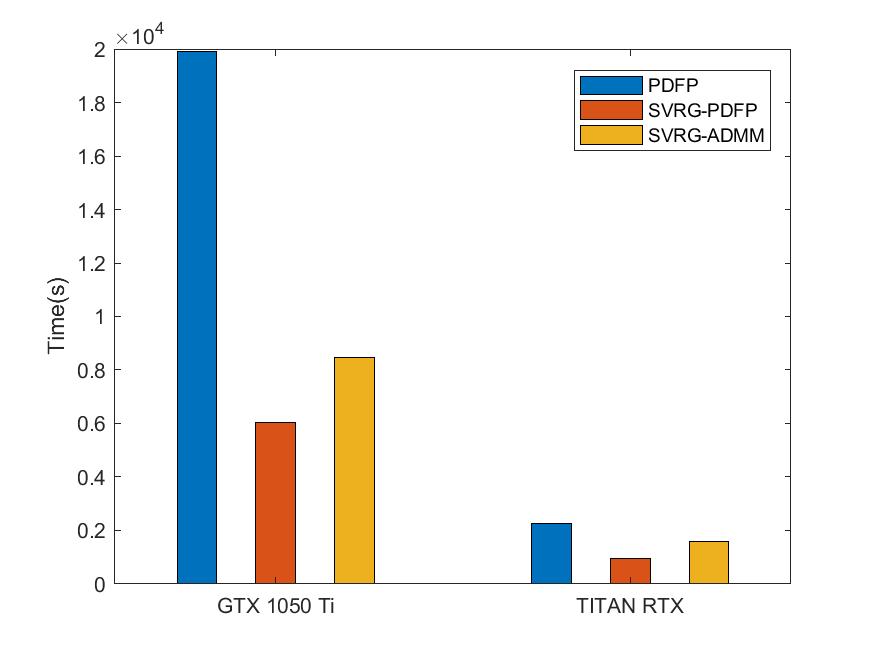}
}
\centering
\caption{Computation times of different methods for a given PSNR level over 10 repetitions.}
\label{CT3-11}
\end{figure}

\begin{figure}[htbp]
\centering
\subfigure[Ground truth]{
\centering
\includegraphics[width = 1.4 in ]{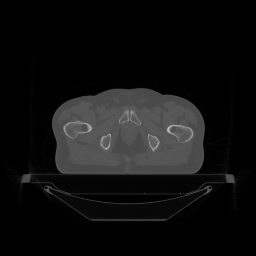}
}\hspace{1mm}
\subfigure[PDFP, PSNR= 50.59]{
\centering
\includegraphics[width = 1.4 in]{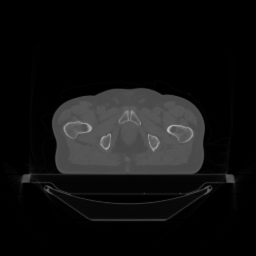}
}\hspace{1mm}
\subfigure[SPDFP, PSNR= 40.27]{
\centering
\includegraphics[width = 1.4 in]{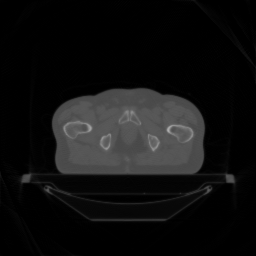}
}

\subfigure[SVRG-PDFP, PSNR= 51.79]{
\centering
\includegraphics[width = 1.4 in]{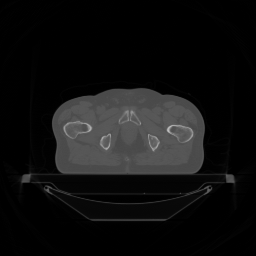}
}\hspace{1mm}
\subfigure[OPG-ADMM, PSNR= 41.24]{
\centering
\includegraphics[width = 1.4 in]{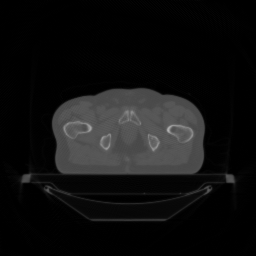}
}\hspace{1mm}
\subfigure[SVRG-ADMM, PSNR= 51.75]{
\centering
\includegraphics[width = 1.4 in]{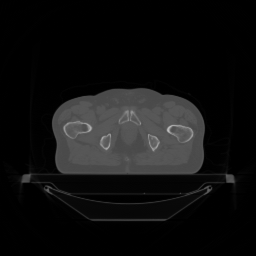}
}

\centering
\caption{Average of one slice of 3D image reconstruction of different method over 10 repetitions.}
\label{CT3-2}
\end{figure}
 FIG. \textbf{\ref{CT3-1}} give the results of PSNR of the images over time on two devices and FIG. \textbf{\ref{CT3-11}} show the computation time for different algorithms to achieve a given PSNR level (if achievable). It can be seen that the stochastic algorithms are generally quicker than deterministic algorithms as the problem size of this example is much larger than 2D case. The stochastic algorithms with SVRG perform better with both GPU devices in terms of both time and accuracy.  Finally, a slice of the reconstructed 3D images with different algorithms are shown  in FIG. \textbf{\ref{CT3-2}} to further verify the image quality of the reconstructed images of different algorithms.

\section{Discussions and Conclusion.}
In this paper, we proposed the stochastic variance reduced gradient primal dual fixed point method (SVRG-PDFP). We established the convergence rates O(1/k) and linear   for general and strongly convex cases respectively, which are standard results  for SVRG types of methods in the literature.  
Finally, numerical examples on both graph guide logistic regression and computed tomography reconstruction in 2D and 3D  are performed and compared to the full batch PDFP, stochastic PDFP (without SVRG) and the variants of stochastic ADMM. Our nuemrical results show that SVRG-PDFP show the advantages in terms of accuracy and computation speed, especially in the case of relatively limited parallel computing resource  in large scale problems. Thus the proposed algorithm could be useful for  CT reconstruction at clinics, where high performance computing resources are not at easy access.

\section*{Appendix.}
\textbf{Proof of Lemma \textbf{\ref{lm1}}:}
\begin{proof}
Let $\psi_{i_k} = \nabla f_{i_k}(x_k) - \nabla f_{i_k}(\tilde{x}) - (\nabla f(x_{k}) - \nabla f(\tilde{x}))$, one has
\begin{equation}\label{lm1eq1}
\begin{aligned}
\mathbb{E}\Big(\Big\lVert \frac{1}{b} \sum_{i_k \in I_k} \psi_{i_k} \Big\rVert^2\Big)
& = \frac{1}{b^2}\mathbb{E}\big(\sum_{i_k,i_{k'} \in I_k}\psi_{i_k}^T\psi_{i_{k'}}\big)	\\
& = \frac{1}{b^2}\mathbb{E}\Big(\sum_{i_k \not = i_{k'} \in I_k}\psi_{i_k}^T\psi_{i_{k'}}\Big)
+ \frac{1}{b}(\mathbb{E}\lVert \psi_i \rVert_2^2)	\\
& = \frac{b - 1}{bn(n - 1)}\mathbb{E}\Big(\sum_{i_k \not = i_{k'}}\psi_{i_k}^T\psi_{i_{k'}}\Big)
+ \frac{1}{b}\mathbb{E}(\lVert \psi_i \rVert_2^2)	\\
& = \frac{b - 1}{bn(n - 1)}\mathbb{E}\Big(\sum_{i_k, i_{k'} }\psi_{i_k}^T\psi_{i_{k'}}\Big)
- \frac{b - 1}{b(n - 1)}\mathbb{E}(\lVert \psi_i \rVert_2^2) + \frac{1}{b}\mathbb{E}(\lVert \psi_i \rVert_2^2)	\\
& = \frac{n - b}{b(n - 1)}\mathbb{E}(\lVert \psi_i \rVert_2^2), 
\end{aligned}
\end{equation}
where the last inequality follows from the fact that $\sum_{i = 1}^n \psi_{i} = 0$. 
Then 
\begin{equation}\label{lm1eq2}
\begin{aligned}
& \mathbb{E}\big(\lVert \nabla \hat{f}(x_k) - \nabla f(x_k) \rVert^2 \big) \\
& = \mathbb{E}\Big( \Big \lVert \frac{1}{b} \sum_{i_k \in I_k} (\nabla f_{i_k}(x_k) - \nabla f_{i_k}(\tilde{x}) - (\nabla f(x_{k}) - \nabla f(\tilde{x})) \Big \rVert^2 \Big) \\
& = \frac{n - b}{b(n - 1)}\mathbb{E}\big(\lVert \nabla f_{i_k}(x_k) - \nabla f_{i_k}(\tilde{x}) - (\nabla f(x_{k}) - \nabla f(\tilde{x})) \rVert^2 \big)\\
& = \frac{n - b}{b(n - 1)}\mathbb{E}\big(\lVert \nabla f_{i_k}(x_k) - \nabla f_{i_k}(\tilde{x})) \rVert^2 - \lVert  (\nabla f(x_{k}) - \nabla f(\tilde{x}) \rVert^2 \big) \\
& \leq \frac{n - b}{b(n - 1)}\mathbb{E}\big(\lVert \nabla f_{i_k}(x_k) - \nabla f_{i_k}(\tilde{x})) \rVert^2 \big) \\
& \leq \frac{2(n - b)}{b(n - 1)}\mathbb{E}\big(\lVert \nabla f_{i_k}(x_k) - \nabla f_{i_k}(x^*)) \rVert^2\big) + \frac{2(n - b)}{b(n - 1)}\mathbb{E}\big(\lVert \nabla f_{i_k}(\tilde{x}) - \nabla f_{i_k}(x^*)) \rVert^2\big) \\
& = \frac{2(n - b)}{b(n - 1)}\sum_{i = 1}^n \frac{1}{n}\lVert \nabla f_{i_k}(x_k) - \nabla f_{i_k}(x^*)) \rVert^2 + \frac{2(n - b)}{b(n - 1)}\sum_{i = 1}^n \frac{1}{n}\lVert \nabla f_{i_k}(\tilde{x}) - \nabla f_{i_k}(x^*)) \rVert^2\\
& \leq \frac{4L_{max}(n - b)}{b(n - 1)}\Big( f(x_k) - f(x^*) + f(\tilde{x}) - f(x^*) - \nabla f(x^*)^T(x_k + \tilde{x} - 2x^*) \Big) \\
& = 4L_{max}C(b)\big( D_f(x_k,x^*) + D_f(\tilde{x},x^*) \big).
\end{aligned}
\end{equation}
We note that the last inequality uses the fact: $\frac{1}{n} \sum_{i = 1}^n \lVert \nabla f_i(x) - \nabla f_i(x^*)\rVert_2^2 \leq 2L_{max}(f(x) - f(x^*) - \nabla f(x^*)^T(x - x^*))$ which can be found in Lemma 3.4 of \cite{SVRG}.
\end{proof}

\begin{lemma}\label{lm2}
Suppose $f(x)$ has $\frac{1}{\beta}$-Lipschitz continuous gradient, given $0 < \gamma \leq \beta$, the following estimate holds
\begin{equation}\label{lm2eq1}
\begin{aligned}
2\gamma(f(x_{k + 1}) - f(x)) 
& \leq \lVert x_{k} - x \rVert_2^2 - \lVert x_{k + 1} - x \rVert_2^2
+ 2\gamma^2\lVert \nabla \hat{f}(x_k) - \nabla f(x_k))\rVert_2^2 \\ 
& \qquad + 2\gamma (B(x - x_{k + 1}))^Tv_{k + 1} 
 - 2\gamma(\nabla \hat{f}(x_k) - \nabla f(x_k))^T(\overline{x}_k - x), 
\qquad \forall x \in \mathbb{R}^d
\end{aligned}
\end{equation}
where $x_{k}$ is the $kth$ inner iterate of Algorithm \textbf{$1$} and $\overline{x}_k = x_k - \gamma \nabla f(x_k) - \gamma B^Tv_{k + 1}$.
\end{lemma}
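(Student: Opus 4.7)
The plan is to combine the descent lemma with convexity of $f$, then use the explicit update rule of Algorithm 1 to recast the resulting inner product via the polarization identity and absorb the stochastic error as a controlled perturbation around the deterministic iterate $\overline{x}_k$.

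First I will invoke the descent lemma: since $\nabla f$ is $1/\beta$-Lipschitz and $\gamma \leq \beta$,
$$f(x_{k+1}) \leq f(x_k) + \nabla f(x_k)^T(x_{k+1} - x_k) + \tfrac{1}{2\gamma}\|x_{k+1} - x_k\|_2^2.$$
Combining with the first-order convexity inequality $f(x_k) - f(x) \leq \nabla f(x_k)^T(x_k - x)$ and grouping the inner products yields
$$f(x_{k+1}) - f(x) \leq \nabla f(x_k)^T(x_{k+1} - x) + \tfrac{1}{2\gamma}\|x_{k+1} - x_k\|_2^2.$$

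Next, I will exploit the update $x_{k+1} = x_k - \gamma\nabla\hat{f}(x_k) - \gamma B^Tv_{k+1}$, which rearranges as
$$\gamma\nabla f(x_k) = (x_k - x_{k+1}) - \gamma B^Tv_{k+1} - \gamma\bigl(\nabla\hat{f}(x_k) - \nabla f(x_k)\bigr).$$
Multiplying the bound by $2$ and substituting this identity into $2\gamma\nabla f(x_k)^T(x_{k+1}-x)$ splits the right-hand side into three parts. The term $2(x_k - x_{k+1})^T(x_{k+1} - x)$ is handled by the polarization identity $2(a-b)^T(b-c) = \|a-c\|_2^2 - \|b-c\|_2^2 - \|a-b\|_2^2$ with $(a,b,c)=(x_k,x_{k+1},x)$; crucially, the resulting $-\|x_k - x_{k+1}\|_2^2$ exactly cancels the leftover $\|x_{k+1}-x_k\|_2^2$ from the descent step. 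The term $-2\gamma(B^Tv_{k+1})^T(x_{k+1}-x)$ immediately produces $2\gamma(B(x-x_{k+1}))^Tv_{k+1}$.

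The last step is to convert $-2\gamma(\nabla\hat{f}(x_k) - \nabla f(x_k))^T(x_{k+1} - x)$ into the stated form involving $\overline{x}_k$. Here I use the simple observation that by the definitions of $x_{k+1}$ and $\overline{x}_k$,
$$x_{k+1} - \overline{x}_k = -\gamma\bigl(\nabla\hat{f}(x_k) - \nabla f(x_k)\bigr),$$
so $x_{k+1}-x = (\overline{x}_k - x) - \gamma(\nabla\hat{f}(x_k) - \nabla f(x_k))$. Substituting this decomposition produces exactly $2\gamma^2\|\nabla\hat{f}(x_k) - \nabla f(x_k)\|_2^2 - 2\gamma(\nabla\hat{f}(x_k) - \nabla f(x_k))^T(\overline{x}_k - x)$, matching \eqref{lm2eq1}. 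No step is delicate; the only things to watch are the signs in the polarization identity and the cancellation of $\|x_k - x_{k+1}\|_2^2$, which is what makes the choice of step-size constant $\tfrac{1}{2\gamma}$ (and hence the assumption $\gamma \leq \beta$) tight for this argument.
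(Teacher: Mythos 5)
Your proposal is correct and follows essentially the same route as the paper's proof: descent lemma plus convexity to get $f(x_{k+1})-f(x)\leq \nabla f(x_k)^T(x_{k+1}-x)+\tfrac{1}{2\gamma}\lVert x_{k+1}-x_k\rVert_2^2$, the update identity $\nabla\hat f(x_k)+B^Tv_{k+1}+\tfrac{1}{\gamma}(x_{k+1}-x_k)=0$ to rewrite the inner product, the three-point (polarization) identity to cancel $\lVert x_{k+1}-x_k\rVert_2^2$, and finally the shift $x_{k+1}-\overline{x}_k=-\gamma(\nabla\hat f(x_k)-\nabla f(x_k))$ to produce the variance term. The only cosmetic difference is that you substitute the update rule directly into $\nabla f(x_k)^T(x_{k+1}-x)$ whereas the paper adds the corresponding zero term; the algebra is identical.
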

\begin{proof}
Recall the update of $x$ in Algorithm \textbf{$1$}, one has
\begin{equation}\label{formu1}
\nabla \hat{f}(x_k) + B^Tv_{k + 1} + \frac{1}{\gamma}(x_{k + 1} - x_{k}) = 0.   
\end{equation}
where 
\begin{equation}
\begin{aligned}
\nabla \hat{f}(x_k) & = \frac{1}{b}\sum_{i_k \in I_k}(\nabla f_{i_k}(x_k) - \nabla f_{i_k}(\tilde{x})) + \nabla f(\tilde{x}).\\
\end{aligned}   
\end{equation}
Using the convexity and $\frac{1}{\beta}$ Lipschitz continuous gradient of $f(x)$, we have
\begin{equation}\label{lm2eq2}
\begin{aligned}
f(x_{k + 1}) - f(x)
& = f(x_{k + 1}) - f(x_k) + f(x_k) - f(x) \\
& \leq \nabla f(x_k)^T (x_{k + 1} - x_k) + \frac{1}{2\beta}\lVert x_{k + 1} - x_k \rVert_2^2 + \nabla f(x_k)^T (x_k - x) \\
& = \nabla f(x_k)^T (x_{k + 1} - x) + \frac{1}{2\beta}\lVert x_{k + 1} - x_k \rVert_2^2.
\end{aligned}    
\end{equation}
Recall  Eq. \textbf{(\ref{formu1})}, one gets
\begin{equation}\label{lm2eq3}
\begin{aligned}
(x - x_{k + 1})^T(\nabla \hat{f}(x_k) + B^Tv_{k + 1} + \frac{1}{\gamma}(x_{k + 1} - x_k)) = 0. 
\end{aligned}
\end{equation}
Combing Eq. \textbf{(\ref{lm2eq2})} and \textbf{(\ref{lm2eq3})} and  using the fact $0 < \gamma \leq\beta$, one has
\begin{equation}\label{lm2eq4} 
\begin{aligned}
& f(x_{k + 1}) - f(x) \\
& \leq (\nabla \hat{f}(x_k) - \nabla f(x_k))^T(x - x_{k + 1})
+ (B(x - x_{k + 1}))^Tv_{k + 1} + \frac{1}{2\beta}\lVert x_{k + 1} - x_k \rVert_2^2 \\
& \qquad + \frac{1}{\gamma}(x - x_{k + 1})^T(x_{k + 1} - x_k) \\
&  = (\nabla \hat{f}(x_k) - \nabla f(x_k))^T(x - x_{k + 1})
+ (B(x - x_{k + 1}))^Tv_{k + 1} + \frac{1}{2\beta}\lVert x_{k + 1} - x_k \rVert_2^2 \\
& \qquad + \frac{1}{2\gamma}(\lVert x_k - x \rVert_2^2 -\lVert x_{k + 1} - x \rVert_2^2 - \lVert x_{k + 1} - x_k \rVert_2^2)\\
&  \leq (\nabla \hat{f}(x_k) - \nabla f(x_k))^T(x - x_{k + 1})
+ (B(x - x_{k + 1}))^Tv_{k + 1} 
+ \frac{1}{2\gamma}(\lVert x_k - x \rVert_2^2 -\lVert x_{k + 1} - x \rVert_2^2).\\
\end{aligned}
\end{equation}
Let $\overline{x}_k = x_k - \gamma \nabla f(x_k) - \gamma B^Tv_{k + 1}$, then Eq. \textbf{(\ref{lm2eq4})} can be rewritten as 
\begin{equation}\label{lm2eq5}
\begin{aligned}   
& f(x_{k + 1}) - f(x) \\
& \leq (\nabla \hat{f}(x_k) - \nabla f(x_k))^T(x - x_{k + 1})
+ (B(x - x_{k + 1}))^Tv_{k + 1}  
+ \frac{1}{2\gamma}(\lVert x_k - x \rVert_2^2 -\lVert x_{k + 1} - x \rVert_2^2)\\
& = (\nabla \hat{f}(x_k) - \nabla f(x_k))^T(x  - \overline{x}_k + \overline{x}_k -  x_{k + 1})
+ (B(x - x_{k + 1}))^Tv_{k + 1} + \frac{1}{2\gamma}(\lVert x_k - x \rVert_2^2\\
&\qquad -\lVert x_{k + 1} - x \rVert_2^2)\\
& \leq (\nabla \hat{f}(x_k) - \nabla f(x_k))^T(x  - \overline{x}_k) +
\gamma \lVert \nabla \hat{f}(x_k) - \nabla f(x_k) \rVert_2^2
+ (B(x - x_{k + 1}))^Tv_{k + 1} + \frac{1}{2\gamma}(\lVert x_k - x \rVert_2^2\\
&\qquad -\lVert x_{k + 1} - x \rVert_2^2),\\
\end{aligned}
\end{equation}
Multiplying both sides of Eq. \textbf{(\ref{lm2eq5})} by $2\gamma$, we get the result.
\end{proof}

\begin{lemma}\label{lm3}
Given $0 < \lambda \leq 1/\rho_{max}(BB^T)$, the following estimate holds:
\begin{equation}\label{lm3eq}
g^*(v_{k + 1}) - g^*(v) 
\leq (Bx_{k + 1})^T(v_{k + 1} - v) + \lVert v - v_{k} \rVert_G^2 - \lVert v - v_{k + 1} \rVert_G^2, \qquad  \forall v \in V.
\end{equation}
where $G = \frac{\gamma}{2\lambda}(I - \lambda BB^T)$ and $\rho_{max}(BB^T)$ denotes the maximum eigenvalue of matrix $BB^T$.
\end{lemma}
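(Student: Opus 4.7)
}
The plan is to begin with the optimality condition of the proximal update for $v_{k+1}$, since $v_{k+1} = \mathrm{Prox}_{\frac{\lambda}{\gamma}g^*}\bigl(\frac{\lambda}{\gamma}By_{k+1}+v_k\bigr)$ means that
\[
\tfrac{\lambda}{\gamma}By_{k+1}+v_k-v_{k+1}\in \tfrac{\lambda}{\gamma}\partial g^*(v_{k+1}),
\]
or equivalently $By_{k+1}+\tfrac{\gamma}{\lambda}(v_k-v_{k+1})\in \partial g^*(v_{k+1})$. Applying the subgradient inequality of $g^*$ at $v_{k+1}$ against any $v\in V$ yields
\[
g^*(v_{k+1})-g^*(v)\le (By_{k+1})^T(v_{k+1}-v)+\tfrac{\gamma}{\lambda}(v_k-v_{k+1})^T(v_{k+1}-v).
\]

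Next, I would rewrite the right-hand side so that $x_{k+1}$ replaces $y_{k+1}$. Comparing the definitions of $y_{k+1}$ and $x_{k+1}$ in Algorithm 1 gives the key identity $y_{k+1}=x_{k+1}+\gamma B^T(v_{k+1}-v_k)$, hence $By_{k+1}=Bx_{k+1}+\gamma BB^T(v_{k+1}-v_k)$. Substituting this and using the standard three-point identity
\[
2(v_k-v_{k+1})^T(v_{k+1}-v)=\lVert v-v_k\rVert_2^2-\lVert v-v_{k+1}\rVert_2^2-\lVert v_k-v_{k+1}\rVert_2^2
\]
produces, after multiplying by $\tfrac{\gamma}{2\lambda}$ appropriately, the bound
\[
g^*(v_{k+1})-g^*(v)\le (Bx_{k+1})^T(v_{k+1}-v)+\gamma(v_{k+1}-v_k)^T BB^T(v_{k+1}-v)+\tfrac{\gamma}{2\lambda}\bigl(\lVert v-v_k\rVert_2^2-\lVert v-v_{k+1}\rVert_2^2-\lVert v_k-v_{k+1}\rVert_2^2\bigr).
\]

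The remaining step is to absorb the extra cross term $\gamma(v_{k+1}-v_k)^TBB^T(v_{k+1}-v)$ into the $G$-norm difference $\lVert v-v_k\rVert_G^2-\lVert v-v_{k+1}\rVert_G^2$. Expanding $G=\tfrac{\gamma}{2\lambda}(I-\lambda BB^T)$ and applying the three-point identity once more to $(B^T(v_{k+1}-v_k))^TB^T(v_{k+1}-v)$ reduces the desired inequality to
\[
\tfrac{1}{2}\lVert B^T(v_{k+1}-v_k)\rVert_2^2\le \tfrac{1}{2\lambda}\lVert v_{k+1}-v_k\rVert_2^2,
\]
which is exactly the condition $\lambda\rho_{\max}(BB^T)\le 1$. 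The main (mild) obstacle is the bookkeeping in the last step: writing all quantities consistently in the $G$-inner product so that the cross term telescopes cleanly and the remaining residual is precisely controlled by the stepsize condition on $\lambda$. Everything else is a direct consequence of the subdifferential characterization of the proximity operator and the identity linking $y_{k+1}$ and $x_{k+1}$.
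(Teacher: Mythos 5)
Your proposal is correct and follows essentially the same route as the paper's proof: both start from the proximal optimality condition for $v_{k+1}$, substitute $y_{k+1}=x_{k+1}+\gamma B^T(v_{k+1}-v_k)$, apply the subgradient inequality for $g^*$, and use the three-point identity, with the leftover term $\tfrac{\gamma}{2}\lVert B^T(v_{k+1}-v_k)\rVert_2^2-\tfrac{\gamma}{2\lambda}\lVert v_{k+1}-v_k\rVert_2^2=-\lVert v_{k+1}-v_k\rVert_G^2$ discarded via $\lambda\rho_{\max}(BB^T)\le 1$. The only cosmetic difference is that the paper groups everything into the $G$-norm from the outset (writing the cross term as $2(v-v_{k+1})^TG(v_{k+1}-v_k)$) rather than expanding in Euclidean norms and recombining at the end.
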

\begin{proof}
From the update of $v_{k + 1}$ in Algorithm $1$, one has
\begin{equation}\label{lm3eq1}
\begin{aligned}
&(v - v_{k + 1})^T\big(\frac{\lambda}{\gamma}\partial g^*(v_{k + 1}) + v_{k + 1} - v_{k} -\frac{\lambda}{\gamma}By_{k + 1}\big) 
 \geq 0; \\
\Leftrightarrow \qquad &(v - v_{k + 1})^T\big(\frac{\lambda}{\gamma}\partial g^*(v_{k + 1}) + v_{k + 1} - v_{k} 
-\frac{\lambda}{\gamma}B(x_k - \gamma \nabla \hat{f}(x_k) - \gamma B^Tv_k\big) 
 \geq 0; \\
\Leftrightarrow \qquad &(v - v_{k + 1})^T\big(\partial g^*(v_{k + 1}) - Bx_{k + 1} + \frac{\gamma}{\lambda}(I - \lambda BB^T)(v_{k + 1} - v_k)\big)
 \geq 0; \\
\Leftrightarrow \qquad &(v - v_{k + 1})^T\big(\partial g^*(v_{k + 1}) - Bx_{k + 1} + 2G(v_{k + 1} - v_k)\big)
 \geq 0; \\
\end{aligned}
\end{equation}
where in the last inequality we use the notation $G = \frac{\gamma}{2\lambda}(I - \lambda BB^T)$. Since $0 < \lambda \leq 1/\rho(BB^T)$, it can be easily verified that $G$ is positive semi-definite.\\
Using the convexity of $g^*(x)$ in last inequality of \textbf{(\ref{lm3eq1})}, one gets
\begin{equation}\label{lm3eq2}
g^*(v) - g^*(v_{k + 1}) + (v - v_{k + 1})^T\big( - Bx_{k + 1} + 2G(v_{k + 1} - v_k)\big) \geq 0.	
\end{equation}
Rearrange both sides of Eq. \textbf{(\ref{lm3eq2})} and use the fact that $2a^TGb = \lVert a + b \rVert_G^2  - \lVert a \rVert_G^2 - \lVert b \rVert_G^2$, then
\begin{equation}\label{lm3eq3}
\begin{aligned}
g^*(v_{k + 1}) - g^*(v) 
& \leq (v - v_{k + 1})^T\Big( - Bx_{k + 1} + 2G(v_{k + 1} - v_k)\Big) \\
& = (Bx_{k + 1})^T(v_{k + 1} - v) + 2(v - v_{k + 1})^TG(v_{k + 1} - v_k) \\
& \leq (Bx_{k + 1})^T(v_{k + 1} - v) + \lVert v_{k} - v \rVert_G^2 - \lVert v_{k + 1} - v \rVert_G^2 - \lVert v_{k + 1} - v_{k} \rVert_G^2 \\
& \leq (Bx_{k + 1})^T(v_{k + 1} - v) + \lVert v_{k} - v \rVert_G^2 - \lVert v_{k + 1} - v \rVert_G^2.
\end{aligned}
\end{equation}
This completes the proof.
\end{proof}

\noindent\textbf{Proof of Theorem \ref{thm1}}:
\begin{proof}
Let $x = x^*$ in Lemma \textbf{\ref{lm2}}, one has 
\begin{equation}\label{thm1eq1}
\begin{aligned}
& 2\gamma(f(x_{k + 1}) - f(x^*)) - 2\gamma (B(x^* - x_{k + 1}))^Tv_{k + 1}\\
& \leq \lVert x_{k} - x^* \rVert_2^2 - \lVert x_{k + 1} - x^* \rVert_2^2
+ 2\gamma^2\lVert \nabla \hat{f}(x_k) - \nabla f(x_k))\rVert_2^2 -2\gamma(\nabla \hat{f}(x_k) - \nabla f(x_k))^T\\
& \qquad (\overline{x} - x^*). 
\end{aligned}	
\end{equation}
Denote $\mathcal{I}_{k}$ as the information up to $k$-th inner iteration of Algorithm $1$. Taking conditional expectation w.r.t $\mathcal{I}_{k}$ in Eq. \textbf{(\ref{thm1eq1})}, noting $\mathbb{E}(\nabla \hat{f}(x_k)|\mathcal{I}_{k}) = \nabla f(x_k)$, we then have
\begin{equation}\label{thm1eq2}
\begin{aligned}
& 2\gamma \mathbb{E}\big(f(x_{k + 1})) - f(x^*)   - (B(x^* - x_{k + 1}))^Tv_{k + 1} |\mathcal{I}_{k}\big)\\
& \leq \lVert x_{k} - x^* \rVert_2^2 - \mathbb{E}(\lVert x_{k + 1} - x^* \rVert_2^2|\mathcal{I}_{k})
+ 2\gamma^2\mathbb{E}\big(\lVert \nabla \hat{f}(x_k) - \nabla f(x_k)\rVert_2^2|\mathcal{I}_{k}\big) \\
& \leq \lVert x_{k} - x^* \rVert_2^2 -\mathbb{E}(\lVert x_{k + 1} - x^* \rVert_2^2|\mathcal{I}_{k}) + 8\gamma^2L_{max}C(b)\Big( D_f(x_k,x^*) + D_f(\tilde{x},x^*) \Big).
\end{aligned}	
\end{equation}
Taking expectation over $\mathcal{I}_{k}$ for $k = 0,1,2,\cdots,m - 1$ and let $M = 4 L_{max}C(b)$, one gets
\begin{equation}\label{thm1eq3}
\begin{aligned}
& 2\gamma\mathbb{E}\big(f(x_{k + 1}) - f(x^*) - (B(x^* - x_{k + 1}))^Tv_{k + 1}\big)\\
& \leq \mathbb{E}(\lVert x_{k} - x^* \rVert_2^2) - \mathbb{E}(\lVert x_{k + 1} - x^* \rVert_2^2)
 + 8\gamma^2L_{max}C(b)\mathbb{E}\big( D_f(x_k,x^*) \big) 
 + 8\gamma^2L_{max}C(b) D_f(\tilde{x},x^*)  \\
& = \mathbb{E}(\lVert x_{k} - x^* \rVert_2^2) - \mathbb{E}(\lVert x_{k + 1} - x^* \rVert_2^2) + 2\gamma^2M\mathbb{E}\big( D_f(x_k,x^*) \big)
  + 2\gamma^2M D_f(\tilde{x},x^*) . 
\end{aligned}	
\end{equation}
Consider the left-hand side of \textbf{(\ref{thm1eq3})}, use the optimality condition for $x$ in Eq. \textbf{(\ref{eq21})} i.e. $\nabla f(x^*) + B^Tv^* = 0$, then
\begin{equation}\label{thm1eq4}
\begin{aligned}
& 2\gamma\mathbb{E}\big(f(x_{k + 1}) - f(x^*) -  (B(x^* - x_{k + 1}))^Tv_{k + 1}\big) \\
& = 2\gamma\mathbb{E}\big(f(x_{k + 1}) - f(x^*) - \nabla f(x^*)^T(x_{k + 1} - x^*) - (B^Tv^*)^T(x_{k + 1} - x^*) -  (B(x^* - x_{k + 1}))^Tv_{k + 1}\big) \\
& = 2\gamma\mathbb{E}\big(D_f(x_{k + 1},x^*) - (B(x^* - x_{k + 1}))^T(v_{k + 1} - v^*)\big).
\end{aligned}	
\end{equation}
Combining the inequalities \textbf{(\ref{thm1eq3})} and \textbf{(\ref{thm1eq4})}, one obtains
\begin{equation}\label{thm1eq5}
\begin{aligned}
& 2\gamma\mathbb{E}\big(D_f(x_{k + 1},x^*) - (B(x^* - x_{k + 1}))^T(v_{k + 1} - v^*)\big)\\
& \leq \mathbb{E}(\lVert x_{k} - x^* \rVert_2^2) - \mathbb{E}(\lVert x_{k + 1} - x^* \rVert_2^2) 
 + 2\gamma^2M\mathbb{E}\big( D_f(x_k,x^*)\big) + 2\gamma^2M D_f(\tilde{x},x^*)  \\
\end{aligned}	
\end{equation}
Taking the sum of  the  inequality \textbf{(\ref{thm1eq5})} from $k = 0,\cdots,m - 1$ and use the fact $x_0 = \tilde{x} = x_s$, one gets
\begin{equation}\label{thm1eq6}
\begin{aligned}
& 2\gamma\big(1 - \gamma M\big)\sum_{k = 1}^{m}\mathbb{E}\big(D_f(x_k,x^*)\big) - 2\gamma\mathbb{E}\sum_{k = 1}^m(B(x^* - x_{k}))^T(v_{k} - v^*)\\
& \leq \lVert x_0 - x^* \rVert_2^2 - \mathbb{E}(\lVert x_{m} - x^* \rVert_2^2)
 + 2(m + 1)\gamma^2M D_f(\tilde{x},x^*) \\
 & \leq \lVert \tilde{x}_{s} - x^* \rVert_2^2
 + 2(m + 1)\gamma^2M D_f(\tilde{x}_s,x^*) \\
\end{aligned}	
\end{equation}
By the convexity of $f(x)$, we have $f(\frac{1}{m}\sum_{k = 1}^m x_k) \leq \frac{1}{m}\sum_{k = 1}^{m}f(x_k)$. Noting that $\tilde{x}_{s + 1} = \frac{1}{m}\sum_{k = 1}^m x_k $, then 
\begin{equation}\label{thm1eq7}
\begin{aligned}
& 2\gamma\big(1 - \gamma M\big)m\mathbb{E}\big(D_f(\tilde{x}_{s + 1},x^*)\big) - 2\gamma\mathbb{E}\sum_{k = 1}^m(B(x^* - x_k))^T(v_{k} - v^*)\\
& \leq \lVert \tilde{x}_{s} - x^* \rVert_2^2
 + 2(m + 1)\gamma^2M D_f(\tilde{x}_{s},x^*) \\
\end{aligned}	
\end{equation}
Recall $R(x,v) = D_f(x,x^*) + D_{g^*}(v,v^*)$. Using Lemma \textbf{\ref{lm3}} and the convexity of $g^*$ and $\tilde{v}_{s + 1} = \frac{1}{m}\sum_{k = 1}^m v_k$, one has 
\begin{equation}\label{thm1eq8}
\begin{aligned}
& 2\gamma(1 - \gamma M)m\mathbb{E}R(\tilde{x}_{s + 1},\tilde{v}_{s + 1}) \\
& \leq 2\gamma(1 - \gamma M)m\mathbb{E}\big(D_f(\tilde{x}_{s + 1},x^*)) + 
2\gamma\mathbb{E}\sum_{k = 1}^{m}D_{g^*}(v_k,v^*) \\
& \leq 2\gamma(1 - \gamma M)m\mathbb{E}\big(D_f(\tilde{x}_s,x^*) \big) 
+  2\gamma\mathbb{E}\sum_{k = 1}^{m} (B(x_k - x^*))^T(v_{k} - v^*) \\
&\qquad + 2\gamma \lVert v_0 - v^* \rVert_G^2
 - 2\gamma \mathbb{E}(\lVert v_m - v^* \rVert_G^2   \\
& \leq \lVert \tilde{x}_{s} - x^* \rVert_2^2
 + 2(m + 1)\gamma^2M D_f(\tilde{x}_{s},x^*) + 2\gamma \lVert \tilde{v}_{s} - v^* \rVert_G^2 \\
& \leq \Big(\frac{2}{\mu_f} + 2(m + 1)\gamma^2M\Big) D_f(\tilde{x}_{s},x^*)  + 2\gamma \lVert \tilde{v}_{s} - v^* \rVert_G^2 \\
& \leq \Big(\frac{2}{\mu_f} + 2(m + 1)\gamma^2M\Big) D_f(\tilde{x}_{s},x^*)   + \frac{\gamma^2(1 - \rho_{min}(BB^T))}{\lambda} \lVert \tilde{v}_{s} - v^* \rVert_2^2 \\
& \leq \Big(\frac{2}{\mu_f} + 2(m + 1)\gamma^2M\Big) D_f(\tilde{x}_{s},x^*) + \frac{2\gamma^2(1 - \rho_{min}(BB^T))}{\lambda\mu_{g^*}} D_{g^*}(\tilde{v}_{s},v^*) \\
& \leq \Big\{ \frac{2}{\mu_f} + 2(m + 1)\gamma^2M + \frac{2\gamma^2(1 - \rho_{min}(BB^T))}{\lambda\mu_{g^*}} \Big\}R(\tilde{x}_{s},\tilde{v}_{s}) \\
& = \kappa R(\tilde{x}_{s},\tilde{v}_{s}),
\end{aligned}	
\end{equation}
where the first inequality is obtained by  $2\gamma\big(1 - \gamma M \big) \leq 2\gamma$ (since $\gamma \leq \frac{1}{M}$), the second inequality is from Lemma \textbf{\ref{lm3}} and the  third inequality is followed from  the inequality \textbf{(\ref{thm1eq7})}. The remain inequality follows from the strong convexity of $f$ and $g^*$.\\
Taking expectation of all the history, we have
\begin{equation}\label{thm1eq9}
\mathbb{E}(R(\tilde{x}_{s + 1},\tilde{v}_{s + 1})) \leq \kappa \mathbb{E}(R(\tilde{x}_{s},\tilde{v}_{s})),
\end{equation}
where 
\begin{equation*}
\kappa = \frac{1}{\mu_f\gamma(1 - \gamma M)m} + \frac{(m + 1)\gamma M}{(1 - \gamma M)m} + \frac{\gamma(1 - \rho_{min}(BB^T))}{\lambda\mu_{g^*}(1 - \gamma M)m}.
\end{equation*}
Therefore it yields 
\begin{equation}\label{thm1eq10}
\mathbb{E}R(\tilde{x}_s,\tilde{v}_s) \leq \kappa^s R(\tilde{x}_0,\tilde{v}_0).
\end{equation}
\end{proof}
\textbf{Proof of Corollary \textbf{\ref{thm11}}}:
\begin{proof}
It is easy to verify that $G = 0$, thus the term $\tilde{v}_{s} - v^* $ vanishes in  Eq. \textbf{(\ref{thm1eq8})}.
\end{proof}
\noindent \textbf{Proof of Theorem \textbf{\ref{thm2}}}:
\begin{proof}
Recall Eq. \textbf{(\ref{thm1eq5})} which {reads}
\begin{equation}\label{thm2eq1}
\begin{aligned}
& 2\gamma\mathbb{E}\Big(D_f(x_{k + 1},x^*) - (B(x^* - x_{k + 1}))^T(v_{k + 1} - v^*)\Big)\\
& \leq \mathbb{E}(\lVert x_{k} - x^* \rVert_2^2) - \mathbb{E}(\lVert x_{k + 1} - x^* \rVert_2^2) 
 + 2\gamma^2M\mathbb{E}\big( D_f(x_{k},x^*) \big) + 2\gamma^2M D_f(\tilde{x},x^*) \\
\end{aligned}	
\end{equation}
Sum Eq. \textbf{(\ref{thm2eq1})} from $k = 0,1,\cdots,m - 1$, then
\begin{equation}\label{thm2eq2}
\begin{aligned}
& 2\gamma\big(1 - \gamma M\big)\sum_{k = 1}^m\mathbb{E}\big(D_f(x_{k},x^*)\big) - 2\gamma\mathbb{E}\sum_{k = 1}^m(B(x^* - x_k))^T(v_k - v^*)\\
& \leq 2\gamma^2MD_f(x_{0},x^*) + \lVert x_0 - x^*\rVert_2^2 - \big(2\gamma^2M\mathbb{E}\big( D_f(x_{m},x^*)\big) + \lVert x_m - x^*\rVert_2^2\big)+ 2\gamma^2M mD_f(\tilde{x},x^*) \\
\end{aligned}	
\end{equation}
Let $\tilde{x}_{s + 1} = \frac{1}{m}\sum_{i = 1}^m x_k$, recall $\hat{x}_{s + 1} = x_m,x_0 = \hat{x}_{s}$, using the convexity of $f$, we have
\begin{equation}\label{thm2eq22}
\begin{aligned}
& 2\gamma\big(1 - \gamma M\big)m\mathbb{E}\big(D_f(\tilde{x}_s,x^*)\big) - 2\gamma\mathbb{E}\sum_{k = 1}^m(B(x^* - x_k))^T(v_k - v^*)\\
& \leq 2\gamma^2M D_f(\hat{x}_{s},x^*) + \lVert \hat{x}_{s} - x^*\rVert_2^2 - \big(2\gamma^2M\mathbb{E}\big(D_f(\hat{x}_{s + 1},x^*)\big) + \lVert \hat{x}_{s + 1} - x^*\rVert_2^2\big) \\
& \quad + 2\gamma^2M mD_f(\tilde{x}_{s},x^*).
\end{aligned}	
\end{equation}
By $\tilde{v}_{s + 1} = \frac{1}{m}\sum_{k = 1}^{m}v_k$,  the definition of  $R(\tilde{x},\tilde{v})$ and the fact that $\gamma \leq \frac{1}{2M}$, one gets 
\begin{equation}\label{thm2eq24}
\begin{aligned}
& 2\gamma(1 - 2\gamma M)m\mathbb{E}R(\tilde{x}_{s + 1},\tilde{v}_{s + 1}) \\
& \leq 2\gamma\big(1 - 2\gamma M\big)m\mathbb{E}\big(D_f(\tilde{x}_{s + 1},x^*)\big)
+ 2\gamma\mathbb{E}\sum_{k = 1}^{m} D_{g^*}(v_k,v^*)  \\
& \leq 2\gamma^2 MD_f(\hat{x}_{s},x^*) + \lVert \hat{x}_{s} - x^*\rVert_2^2 - \big(2\gamma^2 M \mathbb{E}\big(D_f(\hat{x}_{s + 1},x^*)\big) + \lVert \hat{x}_{s + 1} - x^*\rVert_2^2\big) \\
& \quad+ 2\gamma^2M m\big(D_f(\tilde{x}_{s},x^*)\big) - 2\gamma^2Mm\mathbb{E}\big(D_f(\tilde{x}_{s + 1},x^*)\big) + 2\gamma\lVert \hat{v}_{s} - v^*\rVert_2^2 - 2\gamma\mathbb{E}(\lVert \hat{v}_{s + 1} - v^*\rVert_2^2).
\end{aligned}   
\end{equation}
Denote $T_s = 2\gamma^2 M \mathbb{E}\big( D_f(\hat{x}_{s},x^*)\big) + \lVert \hat{x}_s - x^*\rVert_2^2 + 2\gamma^2M m\mathbb{E}\big(D_f(\tilde{x}_{s},x^*)\big)$. Taking the expectation of all the history, we have
\begin{equation}\label{thm2eq3}
\begin{aligned}
& 2\gamma(1 - 2\gamma M)m\mathbb{E}(R(\tilde{x}_{s + 1},\tilde{v}_{s + 1})) \\
& \leq T_{s} - T_{s + 1} + 2\gamma\mathbb{E}(\lVert \hat{v}_{s} - v^*\rVert_G^2) - 2\gamma\mathbb{E}(\lVert \hat{v}_{s + 1} - v^*\rVert_G^2).
\end{aligned}	
\end{equation}
Sum the equation \textbf{(\ref{thm2eq3})} from $0$ to $T - 1$, denote $\overline{x}_T = \frac{1}{T}\sum_{s = 1}^T \tilde{x}_s, \overline{v}_T= \frac{1}{T}\sum_{s = 1}^T \tilde{v}_s$ , then we get
\begin{equation}\label{thm2eq5}
\begin{aligned}
 2\gamma&(1 - 2\gamma M)mT\mathbb{E}(R(\overline{x}_T,\overline{v}_T)) \\
& \leq 2\gamma(1 - 2\gamma M)m\sum_{s = 1}^T\mathbb{E}(R(\tilde{x}_s,\tilde{v}_s)) \\
& \leq \sum_{s = 1}^T \Big( T_{s - 1} - T_s + 2\gamma \big(\mathbb{E}(\lVert \hat{v}_{s - 1} - v^* \rVert_G^2) - \mathbb{E}(\lVert \hat{v}_{s} - v^* \rVert_G^2 \big)\big) \Big) \\
& \leq T_0 + 2\gamma \lVert \hat{v}_0 - v^* \rVert_G^2 \\
& = 2\gamma^2MD_f(\hat{x}_{0},x^*) + \lVert \hat{x}_0 - x^*\rVert_2^2 + 2\gamma^2M mC(b)D_f(\hat{x}_{0},x^*) + 2\gamma\lVert \hat{v}_0 - v^* \rVert_G^2  \\
& = 2\gamma^2ME.
\end{aligned}	
\end{equation}
where $E = D_f(\hat{x}_{0},x^*) + \frac{\lVert \hat{x}_0 - x^*\rVert_2^2}{2\gamma^2M} +  mC(b)D_f(\hat{x}_{0},x^*) + \frac{\lVert \hat{v}_0 - v^* \rVert_G^2}{\gamma M}$. This yields
\begin{equation}\label{thm2eq6}
\begin{aligned}
& \mathbb{E}(R(\overline{x}_T,\overline{v}_T)) 
\leq  \frac{\gamma ME}{(1 - 2\gamma M)mT}.
\end{aligned}	
\end{equation}

\end{proof}


\begin{thebibliography}{10}

\bibitem{OSADMM}
{\sc Samaneh Azadi and Suvrit Sra}, { Towards an optimal stochastic
  alternating direction method of multipliers}, in International Conference on
  Machine Learning, 2014, pp.~620--628.

\bibitem{Kim}
{\sc Seyoung Kim, Kyung-Ah Sohn, Eric P. Xing}, {A multivariate regression approach to association analysis of a quantitative trait network, Bioinformatics}, Volume 25, Issue 12, 15 June 2009, Pages i204--i212.

\bibitem{Banerjee2008model}
{\sc Onureena Banerjee, Laurent~El Ghaoui, and Alexandre d’Aspremont}, {
  Model selection through sparse maximum likelihood estimation for multivariate
  gaussian or binary data}, Journal of Machine learning research, 9 (2008),
  pp.~485--516.

\bibitem{FISTA}
{\sc Amir Beck and Marc Teboulle}, { A fast iterative shrinkage-thresholding
  algorithm for linear inverse problems}, SIAM journal on imaging sciences, 2
  (2009), pp.~183--202.

\bibitem{ADMM}
{\sc Stephen Boyd, Neal Parikh, Eric Chu, Borja Peleato, Jonathan Eckstein,
  et~al.}, { Distributed optimization and statistical learning via the
  alternating direction method of multipliers}, Foundations and
  Trends{\textregistered} in Machine learning, 3 (2011), pp.~1--122.

\bibitem{CC01a}
{\sc Chih-Chung Chang and Chih-Jen Lin}, { {LIBSVM}: A library for support
  vector machines}, ACM Transactions on Intelligent Systems and Technology, 2
  (2011), pp.~27:1--27:27.

\bibitem{PDFP}
{\sc Peijun Chen, Jianguo Huang, and Xiaoqun Zhang}, { A primal dual fixed
  point algorithm for convex separable minimization with applications to image
  restoration}, Inverse Problems, 29 (2013).

\bibitem{PFBS}
{\sc Patrick~L Combettes and Val{\'e}rie~R Wajs}, { Signal recovery by
  proximal forward-backward splitting}, Multiscale Modeling \& Simulation, 4
  (2005), pp.~1168--1200.

\bibitem{ADMM2}
{\sc Wei Deng and Wotao Yin}, { On the global and linear convergence of the
  generalized alternating direction method of multipliers}, Journal of
  Scientific Computing, 66 (2016), pp.~889--916.

\bibitem{FOBOS}
{\sc John Duchi and Yoram Singer}, { Efficient online and batch learning
  using forward backward splitting}, Journal of Machine Learning Research, 10
  (2009), pp.~2899--2934.
  
  
\bibitem{GFB}
{\sc Raguet H, Fadili J, Peyré G}, { A generalized forward-backward splitting[J]}, SIAM Journal on Imaging Sciences, 2013, 6(3): 1199-1226.

\bibitem{DR1}
{\sc Jonathan Eckstein and Dimitri~P Bertsekas}, { On the douglas-rachford
  splitting method and the proximal point algorithm for maximal monotone
  operators}, Mathematical Programming, 55 (1992), pp.~293--318.

\bibitem{Friedman2008sparse}
{\sc Jerome Friedman, Trevor Hastie, and Robert Tibshirani}, { Sparse
  inverse covariance estimation with the graphical lasso}, Biostatistics, 9
  (2008), pp.~432--441.
  

\bibitem{SpitBregman}
{\sc Tom Goldstein and Stanley Osher}, { The split bregman method for
  l1-regularized problems}, SIAM journal on imaging sciences, 2 (2009),
  pp.~323--343.

\bibitem{FastPPT}
{\sc Osman G{\"u}ler}, { New proximal point algorithms for convex
  minimization}, SIAM Journal on Optimization, 2 (1992), pp.~649--664.

\bibitem{ADMM1}
{\sc Bingsheng He and Xiaoming Yuan}, { On the o(1/n) convergence rate of
  the douglas--rachford alternating direction method}, SIAM Journal on
  Numerical Analysis, 50 (2012), pp.~700--709.

\bibitem{SVRG}
{\sc Rie Johnson and Tong Zhang}, { Accelerating stochastic gradient descent
  using predictive variance reduction}, in Advances in neural information
  processing systems, 2013, pp.~315--323.


\bibitem{FP2O}
{\sc Charles~A Micchelli, Lixin Shen, and Yuesheng Xu}, { Proximity
  algorithms for image models: denoising}, Inverse Problems, 27 (2011),
  p.~045009.

\bibitem{PDHG}
{\sc Chambolle A and Pock T}, {A first-order primal–dual algorithm for convex problems with applications to imaging}, J. Math. Imaging and Vision, 40 (2011),
  pp. 120–145, https://doi.org/10.1007/s10851-010-0251-1.

\bibitem{SPDHG}
{\sc A.~Chambolle, M.~J. Ehrhardt, P.~Richtarik, and C.-B. Schonlieb}, {
  Stochastic primal-dual hybrid gradient algorithm with arbitrary sampling and
  imaging applications}, 2017.


\bibitem{lemma}
{\sc Eric Moulines and Francis~R Bach}, { Non-asymptotic analysis of
  stochastic approximation algorithms for machine learning}, in Advances in
  Neural Information Processing Systems, 2011, pp.~451--459.

\bibitem{SAG}
{\sc N.Le Roux, M.Schmidt and Francis~R Bach}, { A stochastic gradient method with an exponential convergence rate for finite training sets}, in Advances in
  Neural Information Processing Systems, 2012, pp.~2672--2680.

\bibitem{Nesterov}
{\sc Yurii~E Nesterov}, { A method for solving the convex programming
  problem with convergence rate o (1/k\^{} 2)}, in Dokl. akad. nauk Sssr,
  vol.~269, 1983, pp.~543--547.


\bibitem{STOCADMM}
{\sc Ouyang, H., He, N., Tran, L., Gray, A.:}, { Stochastic alternating direction method of multipliers}, in International Conference on
  Machine Learning, 2013, pp.~80--88.

\bibitem{opt}
{\sc B~Polyak}, { Introduction to optimization}, Software,New York,  (1987).

\bibitem{polyak}
{\sc Boris~T Polyak}, { Introduction to optimization. optimization
  software}, Inc., Publications Division, New York, 1 (1987).

\bibitem{radford2015unsupervised}
{\sc Alec Radford, Luke Metz, and Soumith Chintala}, { Unsupervised
  representation learning with deep convolutional generative adversarial
  networks}, arXiv preprint arXiv:1511.06434,  (2015).

\bibitem{converPGD}
{\sc Lorenzo Rosasco, Silvia Villa, and Bang~C{\^o}ng V{\~u}}, { Convergence
  of stochastic proximal gradient algorithm}, arXiv preprint arXiv:1403.5074,
  (2014).

\bibitem{DR2}
{\sc Simon Setzer}, { Split bregman algorithm, douglas-rachford splitting
  and frame shrinkage}, in International Conference on Scale Space and
  Variational Methods in Computer Vision, Springer, 2009, pp.~464--476.

\bibitem{ProxSDCA}
{\sc Shai Shalev-Shwartz and Tong Zhang}, { Proximal stochastic dual
  coordinate ascent}, arXiv preprint arXiv:1211.2717,  (2012).

\bibitem{ACProxSDCA}
{\sc Shai Shalev-Shwartz and Tong Zhang}, { Accelerated proximal
  stochastic dual coordinate ascent for regularized loss minimization}, in
  International Conference on Machine Learning, 2014, pp.~64--72.

\bibitem{RDAOPGADMM}
{\sc Taiji Suzuki}, { Dual averaging and proximal gradient descent for
  online alternating direction multiplier method}, in International Conference
  on Machine Learning, 2013, pp.~392--400.

\bibitem{SDCAADMM}
{\sc Taiji Suzuki}, { Stochastic dual
  coordinate ascent with alternating direction method of multipliers}, in
  International Conference on Machine Learning, 2014, pp.~736--744.

\bibitem{Generalizedlasso}
{\sc Ryan~Joseph Tibshirani}, { The solution path of the generalized lasso},
  Stanford University, 2011.

\bibitem{Vapnik}
{\sc Vladimir~N Vapnik}, { The nature of statistical learning}, Theory,
  (1995).

\bibitem{RDA}
{\sc Lin Xiao}, { Dual averaging methods for regularized stochastic learning
  and online optimization}, Journal of Machine Learning Research, 11 (2010),
  pp.~2543--2596.

\bibitem{ProxSVRG}
{\sc Lin Xiao and Tong Zhang}, { A proximal stochastic gradient method with
  progressive variance reduction}, SIAM Journal on Optimization, 24 (2014),
  pp.~2057--2075.

\bibitem{ERM}
{\sc T.Hastie, R.Tibshirani, and J,Friedman}, {The Elements of Statistical Learning: Data mining, Inference, and Prediction},2nd., Springer, New York,2009

\bibitem{SCASADMM}
{\sc Shen-Yi Zhao, Wu-Jun Li, and Zhi-Hua Zhou}, { Scalable stochastic
  alternating direction method of multipliers}, arXiv preprint{}
  arXiv:1502.03529,  (2015).

\bibitem{SVRGADMM}
{\sc Shuai Zheng and James~T Kwok}, { Fast-and-light stochastic admm.}, in
  IJCAI, 2016, pp.~2407--2613.

\bibitem{SAGADMM}
{\sc Wenliang Zhong and James Kwok}, { Fast stochastic alternating direction
  method of multipliers}, in International Conference on Machine Learning,
  2014, pp.~46--54.

\bibitem{OPG}
{\sc J.Duchi and Y.Singer}, { Efficient online and batch learning using forward backward splitting}, Journal of machine Learning Research,10:2873-2908,2009.

\bibitem{Xray}
{\sc H.Gao}, { Fast parallel algorithms for the X-ray transform an its adjoint}, Medical Physics (2012)

\bibitem{ZZ20}
{\sc Zhu, Y., Zhang, X.},{ Stochastic Primal Dual Fixed Point Method for Composite Optimization. } J Sci Comput 84, 16 (2020). https://doi.org/10.1007/s10915-020-01265-2
\end{thebibliography}
\end{document}